\numberwithin{equation}{section}
\theoremstyle{plain}
\newtheorem{Th}{Theorem}[section]
\newtheorem{Lemma}[Th]{Lemma}
\newtheorem{Cor}[Th]{Corollary}
\newtheorem{Prop}[Th]{Proposition}
\newtheorem{Def}[Th]{Definition}
\theoremstyle{definition}
\theoremstyle{remark}
\newtheorem*{remark}{Remark}
\newcommand{\Q}{\mathbb{Q}}
\newcommand{\R}{\mathbb{R}}
\newcommand{\Z}{\mathbb{Z}}
\newcommand{\F}{\mathbb{F}}
\newcommand{\LF}{\mathrm{L}}
\newcommand{\ord}{\operatorname{ord}}
\newcommand{\Vol}{\operatorname{Vol}}
\newcommand{\Tr}{\operatorname{Tr}}
\newcommand{\N}{\operatorname{N}}
\begin{document}

\title{On inverted Kloosterman sums over finite fields}
\author{Xin Lin}
\address{Department of Mathematics, Shanghai Maritime University, Shanghai 201306, PR China.}
\author{Daqing Wan}
\email{xlin1126@hotmail.com}
\address{Department of Mathematics, University of California, Irvine, CA
  92697-3875 USA.}
 \email{dwan@math.uci.edu} 
  
  \subjclass[2020]{11T23, 11L05, 11L07, 11S40}

\keywords{Inverted Kloosterman sums, Exponential sums, L-function, Finite field}

\begin{abstract} The classical $n$-variable Kloosterman sums over finite fields are well understood by 
Deligne's theorem from complex point of view and by Sperber's theorem from $p$-adic point of view. In this paper, we study the 
complex and $p$-adic estimates of {\it inverted} $n$-variable Kloosterman sums,  addressing a question of N. Katz (1995). 
We shall give two complex estimates. The first one is elementary based on Gauss sums. The second estimate is deeper, 
depending on the cohomological results  of Adolphson-Sperber, Denef-Loeser 
and Fu for twisted toric exponential sums. This deeper result assumes that the characteristic $p$ does not divide $n+1$. 
Combining with Dwork's $p$-adic theory, we also determine the exact $p$-adic valuations for zeros and poles of the L-function associated to {\it inverted} $n$-variable Kloosterman sums in the case $p \equiv 1 \mod (n+1)$. 
As we shall see, the {\it inverted} $n$-variable Kloosterman sum is more complicated than the classical $n$-variable Kloosterman sum in all aspects in the sense that our understanding is less complete, partly because the Hodge numbers are now mostly $2$ 
instead of $1$.

\end{abstract}

\maketitle

\allowdisplaybreaks[4]
\section{Introduction}\label{sec1}

Let $\F_{q}$ be the finite field of $q$ elements with characteristic $p$. 
Let $\psi: \F_{q} \rightarrow \mathbb{C}^*$ be a nontrivial additive character and let
$\chi_1,\ldots,\chi_{m}: \F_{q}^* \rightarrow \mathbb{C^*}$ be multiplicative characters.
A classical problem in number theory is to give a good estimate for the mixed character sum
\begin{align}\label{equ0}
	\sum_{x_i\in\F^*_q}\chi_1(f_1)\cdots\chi_m(f_m)\psi(f),
\end{align}
where  $f,f_1,\ldots,f_m\in\F_{q}\left[x_1^{\pm1}, \ldots, x_n^{\pm1} \right]$ are Laurent polynomials.
Reducing $m$ if necessary, we may assume that all the $\chi_i$'s are non-trivial. Using the Gauss sum
$$G(\chi)=\sum_{x\in\F_q^*}\chi(x)\psi(x),$$ 
one obtains the well known relation 
\begin{align*}
	&\sum_{x_i,y_j\in\F^*_q}\overline{\chi_1}(y_1)\cdots\overline{\chi_m}(y_m)\psi\left(f+y_1f_1+\cdots+y_mf_m\right)\\
	&= G(\overline{\chi_1})\cdots G(\overline{\chi_m})\sum_{x_i\in\F^*_q}\chi_1(f_1)\cdots\chi_m(f_m)\psi(f).
\end{align*}
As the Gauss sums are well-understood, the study of  (\ref{equ0}) is reduced to the study of the following 
type of twisted toric exponential sum 
\begin{align}\label{equ01}
	\sum_{x_i\in\F^*_q}\overline{\chi_1}(x_1)\cdots\overline{\chi_n}(x_n)\psi\left(f(x_1,\cdots,x_n)\right),
\end{align}
where some of the $\chi_i$'s may be trivial. 
This type of twisted toric exponential sum has been studied extensively in the literature, most notably by Adolphson-Sperber\cite{AS1987, AS1989,AS1990, AS1993} via Dwork's $p$-adic cohomology, and by Denef-Loeser\cite{DL1991} and Fu\cite{Fu2009,Fu2016} via Grothendieck's $\ell$-adic cohomology.  
For the complex estimate, both approaches depend on Deligne's theorem on the Weil conjectures. A sharp estimate is obtained when $f$ is non-degenerate 
with respect to its Newton polyhedron $\Delta(f)$.  For arbitrary $f$, the sum is still far from well understood. 

An important example of toric exponential sums is the classical $n$-variable Kloosterman sum, where $\chi_1=\cdots = \chi_n=1$ and 
$$f(x_1,\cdots, x_n) = x_1 +\cdots + x_n + \frac{b}{x_1\cdots x_n},  ~ b \in \F_q^*.$$
In this case, the complex weights were determined by Deligne's well known theorem,  and the $p$-adic slopes were determined by Sperber's theorem \cite{Spe1980}. 
It should be noted that for twisted $n$-variable Kloosterman sum (when some of the $\chi_i$'s are non-trivial),  
the $p$-adic slopes are not completely determined in general, except in the case $n=1$ for which Adolphson-Sperber\cite{AS1987c} obtained the generic $p$-adic slopes.  If we invert the above Laurent polynomial and consider 
the following rational function 
$$f(x_1,\cdots, x_n) =\frac{1}{ x_1 +\cdots + x_n + \frac{b}{x_1\cdots x_n}},  ~ b \in \F_q^*,$$
which is no longer a Laurent polynomial, we are led to the so-called {\it inverted} Kloosterman sum. The study of 
such sums goes back to N. Katz\cite{Kat1995}. 

More precisely, in this paper, we study the following \emph{twisted inverted $n$-variable Kloosterman sum} defined by
\begin{align*}
	S_n(\chi,b)&=\mathop{\sum_{x_1\cdots x_{n+1}=b, ~x_i\in\F_q^*}}_{x_1+\cdots+x_{n+1}\neq 0} \chi_1(x_1)\cdots\chi_{n+1}(x_{n+1})\psi\left(\frac{1}{x_1+\cdots+x_{n+1}}\right)\\
	&=\mathop{\sum_{x_1+\cdots+x_{n}+\frac{b}{x_1\cdots x_{n}}\neq 0}}_{x_i\in\F_q^*} \chi_1(x_1)\cdots\chi_{n}(x_n)\chi_{n+1}\left(\frac{b}{x_1\cdots x_{n}}\right)
	\psi\left(\frac{1}{x_1+\cdots+x_{n}+\frac{b}{x_1\cdots x_{n}}}\right),
\end{align*}
where $b\in \F^*_{q}$ and $n\geq 1$.  When $n=1$, Katz\cite{Kat1995} obtained a sharp upper bound for $S_1(\chi,b)$. This result along with the papers of Angel\cite{Ang1996} and Evans\cite{Eva1995} proves that finite upper half plane graphs are Ramanujan in characteristic 2. 
In \cite{Kat1995}, Katz raised the question: what can be said for $S_n(\chi,b)$ when $n\geq 1$? 
The aim of this paper is to study these inverted $n$-variable 
Kloosterman sums from both complex and $p$-adic point of views. As we shall see, this 
class of sums is very interesting as various new features and additional difficulties arise. 

\begin{remark}
The exact sum introduced in \cite{Kat1995} is the following related sum, 
$$T_n(\chi,b)=\mathop{\sum_{x_1\cdots x_{n+1}=1}}_{x_1+\cdots+x_{n+1}\neq 0} \chi_1(x_1)\cdots\chi_{n+1}(x_{n+1})\psi\left(\frac{b}{x_1+\cdots+x_{n+1}}\right).$$
Upon the change of variables $x_i \rightarrow bx_i$, one sees that 
$$T_n(\chi,b) = S_n(\chi,\frac{1}{b^{n+1}}) \chi_1\cdots \chi_{n+1}(b).$$
Thus, the two families of sums $S_n(\chi,b)$ and $T_n(\chi, b)$ are essentially equivalent.  
We work with $S_n(\chi,b)$ as it is the closer inverted analogue of the classical Kloosterman sum. 

\end{remark}

For complex estimate of $S_n(\chi, b)$, the best one can hope for would be a  square root cancellation in the sum $S_n(\chi,b)$, i.e. 
\begin{align*}
	S_n(\chi,b)=O_n(q^{\frac{n}{2}}).
\end{align*}
As we shall see, this is not true for $n>2$ when  $\chi_1=\cdots=\chi_{n+1}$, in which case, $S_n(\chi,b)$ has the main term 
$-q^{n-1}\chi_1(b)$ whose exponent $n-1$ is larger than the exponent $n/2$.  

We shall give two different estimates for $S_n(\chi,b)$. 
The first estimate is based on an elementary method via Gauss sums which already shows the new feature of 
a non-trivial main term when  $\chi_1=\cdots=\chi_{n+1}$. We obtain the following simple estimate for $S_n(\chi,b)$.
\begin{Th}\label{thm0} 
	Notations as above. If $\chi_1=\cdots=\chi_{n+1}$, we have
	\begin{align*}
		\left|S_n(\chi,b)+\frac{(q-1)^{n}}{q}\chi_1(b) \right| \leq q^{\frac{n+1}{2}}.
	\end{align*}
	If $\chi_i\neq\chi_j$ for some $i\neq j$, we have
	\begin{align*}
		\left|S_n(\chi,b)\right| \leq q^{\frac{n+1}{2}}.
	\end{align*}
\end{Th}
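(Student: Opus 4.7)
The plan is to express $S_n(\chi,b)$ as a weighted sum over multiplicative characters of classical Gauss and Jacobi sums, and then estimate term by term via the Gauss--Jacobi factorization. The first Fourier step uses multiplicative orthogonality,
\[
\mathbf{1}_{x_1\cdots x_{n+1}=b}=\frac{1}{q-1}\sum_{\eta}\overline{\eta}(b)\prod_{i=1}^{n+1}\eta(x_i),
\]
which twists $\chi_i\mapsto\chi_i\eta$ and removes the product constraint. The second step is the substitution $x_i=yz_i$ with $y=x_1+\cdots+x_{n+1}\ne 0$ and $\sum_i z_i=1$; this decouples the problem into a radial $y$-sum and an angular $z$-sum on the hyperplane $\sum z_i=1$. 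The angular sum is the $(n+1)$-variable Jacobi sum $J_\eta:=J(\chi_1\eta,\ldots,\chi_{n+1}\eta)$, and the radial sum $\sum_{y\ne 0}(\omega\eta^{n+1})(y)\psi(1/y)$, where $\omega:=\chi_1\cdots\chi_{n+1}$, becomes, after $y\mapsto 1/y$, the Gauss sum $G(\overline{\omega\eta^{n+1}})$, with the convention $G(\chi_0)=-1$. This yields the master identity
\[
S_n(\chi,b)=\frac{1}{q-1}\sum_{\eta}\overline{\eta}(b)\,J_\eta\,G(\overline{\omega\eta^{n+1}}).
\]

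Next, I would apply the Jacobi--Gauss identity $J(\alpha_1,\ldots,\alpha_{n+1})=\prod_i G(\alpha_i)/G(\prod_i\alpha_i)$ (and its degenerate variant when $\prod\alpha_i=\chi_0$) to convert each summand into a product of Gauss sums. In the generic case where all $\chi_i\eta$ and $\omega\eta^{n+1}$ are non-trivial, the summand has absolute value exactly $q^{(n+1)/2}$, using $|G(\alpha)|=\sqrt{q}$ and $|G(\overline{\beta})/G(\beta)|=1$ for non-trivial $\alpha,\beta$. The intermediate case $\omega\eta^{n+1}=\chi_0$ (with all $\chi_i\eta\ne\chi_0$) yields strictly smaller summands of size $q^{(n-1)/2}$. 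Summing at most $q-1$ such terms and dividing by $q-1$, the triangle inequality immediately delivers the bound $q^{(n+1)/2}$.

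The main obstacle, and the source of the theorem's two cases, is the exceptional $\eta=\overline{\chi_j}$ at which some $\chi_i\eta=\chi_0$. The collapse identity $J(\chi_0,\alpha_2,\ldots,\alpha_m)=-J(\alpha_2,\ldots,\alpha_m)$, valid provided at least one $\alpha_i$ remains non-trivial, bounds each such summand by $q^{n/2}$, which is absorbed by the slack freed up in the generic sum. The sole truly obstructive case is $\chi_1=\cdots=\chi_{n+1}$ with $\eta=\overline{\chi_1}$, where \emph{every} $\chi_i\eta$ is simultaneously trivial; the Jacobi sum then degenerates to the counting number $\#\{z\in(\F_q^*)^{n+1}:\sum z_i=1\}=\frac{(q-1)^{n+1}+(-1)^n}{q}$, producing the main term $-\frac{(q-1)^n}{q}\chi_1(b)$ of $S_n(\chi,b)$. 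The delicate bookkeeping to verify that, after subtracting this main term, the remaining exceptional contributions precisely fit within the slack left over by the missing generic summands, is the crucial remaining point.
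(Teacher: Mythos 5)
Your proposal is correct, and it arrives at essentially the same estimate as the paper does, though by a differently organized route. The paper detects both constraints with characters (additive orthogonality for $x_1+\cdots+x_{n+1}=\lambda$, multiplicative for $x_1\cdots x_{n+1}=b$), splits off the $u=0$ contribution $S_1$ as the main term, and bounds the remainder $S_2$ as $\frac{1}{q(q-1)}$ times a sum over $\chi$ of products of $n+3$ Gauss sums; you instead detect only the multiplicative constraint and use the homogeneity substitution $x_i=yz_i$ to factor each $\eta$-summand as $J_\eta\, G(\overline{\omega\eta^{n+1}})$. Since the Gauss--Jacobi factorization turns your generic summand into $\prod_i G(\chi_i\eta)\,G(\overline{\omega\eta^{n+1}})/G(\omega\eta^{n+1})$, which by $G(\alpha)G(\overline{\alpha})=\alpha(-1)q$ is exactly the paper's product of Gauss sums divided by $q$, the two master identities coincide; your version is arguably cleaner because the exceptional and degenerate cases are isolated as properties of a single Jacobi sum. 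The "delicate bookkeeping" you defer does close without difficulty: in the unequal-character case all $q-1$ summands are bounded by $q^{(n+1)/2}$ (generic), $q^{n/2}$ (collapsed), or $q^{(n-1)/2}$ (trivial product), so the triangle inequality gives $|S_n(\chi,b)|\leq q^{(n+1)/2}$ outright; in the equal-character case the unique degenerate term contributes $-\chi_1(b)\bigl(\frac{(q-1)^n}{q}+\frac{(-1)^n}{q(q-1)}\bigr)$, and after subtracting the stated main term the leftover $\frac{1}{q(q-1)}$ is absorbed by the slack $\frac{q^{(n+1)/2}}{q-1}$ from the missing generic summand. The only imprecision worth noting is that your degenerate Jacobi term does not produce the main term exactly but only up to this $O(q^{-2})$ correction, which the paper's $S_1$/$S_2$ split avoids by construction.
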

For large $q$, the error term $q^{(n+1)/2}$ is not the optimal square root cancellation yet. To obtain the deeper square root 
cancellation with error term $O_n(q^{n/2})$, we 
reduce $S_n(\chi,b)$ to a certain twisted toric exponential sum $S^*_{k}(\chi,f)$ which can be handled by the results of Adolphson-Sperber, Denef-Loeser and Fu. 
We check that the related Laurent polynomial $f$ is non-degenerate 
if $p$ does not divide $n+1$. This gives our second estimate.

\begin{Th}\label{thm2} 
	Notations as above. Suppose $p\nmid (n+1)$. If $\chi_1=\cdots=\chi_{n+1}$, we have
	\begin{align*}
		|S_n(\chi,b)+\frac{(q-1)^{n}-(-1)^n}{q}\chi_1(b) |\leq (2n+1)q^{\frac{n}{2}}.
	\end{align*}
	If $\chi_i\neq\chi_j$ for some $i\neq j$, we have
	\begin{align*}
		|S_n(\chi,b)|\leq 2(n+1)q^{\frac{n}{2}}.
	\end{align*}
\end{Th}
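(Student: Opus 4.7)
The plan is to rewrite $S_n(\chi,b)$ as a twisted toric exponential sum in $n+2$ variables, verify non-degeneracy of the resulting Laurent polynomial under the hypothesis $p\nmid n+1$, and apply the cohomological estimates of Adolphson-Sperber, Denef-Loeser and Fu. The starting point is the Fourier identity
\[
\psi(1/u)\cdot\mathbf{1}_{u\neq 0}=\frac{1}{q}\sum_{y\in\F_q^*,\ t\in\F_q}\psi\bigl(y+t(yu-1)\bigr),
\]
valid for every $u\in\F_q$ (the $t$-sum singles out $yu=1$ when $u\neq 0$ and vanishes when $u=0$). Substituting $u=x_1+\cdots+x_{n+1}$ and parameterizing $\prod x_i=b$ via $x_{n+1}=b/(x_1\cdots x_n)$ rewrites $S_n(\chi,b)$ as a sum over $(y,t,x_1,\ldots,x_n)$ twisted by the characters $\chi_i':=\chi_i\overline{\chi_{n+1}}$ on the $x_i$. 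The $t=0$ slice factors elementarily and equals $-(q-1)^n\chi_1(b)/q$ exactly when $\chi_1=\cdots=\chi_{n+1}$, vanishing otherwise. On the $t\neq 0$ slice, the bijective substitutions $t=1/s$ and $y\mapsto sy$ yield
\[
S_n^{(t\neq 0)}(\chi,b)=\frac{\chi_{n+1}(b)}{q}\,S^*(\chi',f),\qquad f(s,y,x_1,\ldots,x_n)=sy+yx_1+\cdots+yx_n+\frac{yb}{x_1\cdots x_n}-\frac{1}{s}.
\]

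The main technical step is to check non-degeneracy of $f$ with respect to its Newton polyhedron $\Delta_\infty(f)$ whenever $p\nmid n+1$. On all facets but one, the partial derivative $\partial_s f_\sigma$ or $\partial_y f_\sigma$ is a monomial that never vanishes on the torus. The delicate facet is spanned by the monomials $\{yx_1,\ldots,yx_n,yb/(x_1\cdots x_n)\}$: the vanishing of all its partials forces $x_1=\cdots=x_n=c$ with $c^{n+1}=b$ and $(n+1)c=0$ in $\F_q$, which has no solution in $(\F_q^*)^{n+2}$ precisely when $p\nmid n+1$. A slicing argument along the $s$-coordinate together with a short determinantal computation in the slice $\{s=0\}$ yields $(n+2)!\Vol(\Delta_\infty(f))=2(n+1)$. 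The cohomological theorem of Adolphson-Sperber, Denef-Loeser and Fu then gives $|S^*(\chi',f)|\leq 2(n+1)\,q^{(n+2)/2}$, which, after dividing by $q$ and combining with the $t=0$ main term, proves the second (non-all-equal) bound of the theorem.

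For the sharper first bound, specific to the all-equal case ($\chi_1=\cdots=\chi_{n+1}$, so all $\chi_i'$ are trivial), we extract an additional unit of decay from the $L$-function. Performing the $y$-sum first in $S^*(1,f)$, using $\sum_{y\in\F_q^*}\psi(y\cdot g)=q\,\mathbf{1}_{g=0}-1$, yields the exact identity $S^*(1,f)=q\,S_n(1,b)+(q-1)^n$, equivalent to the factorization
\[
L^*(1,f,T)=L_n(1,b,qT)\cdot\prod_{j=0}^{n}(1-q^jT)^{(-1)^{n-j+1}\binom{n}{j}}.
\]
A direct inspection of this factorization shows that exactly one of the $2(n+1)$ reciprocal roots of the Adolphson-Sperber polynomial $L^*(1,f,T)^{(-1)^{n+1}}$ is the trivial Weil number of weight $0$ (at $T=1$), matching the explicit $(1-T)$-contribution and accounting exactly for the secondary main term $(-1)^n\chi_1(b)/q$; the remaining $2n+1$ reciprocal roots are pure of weight $n+2$, which gives $|S_n(\chi,b)+((q-1)^n-(-1)^n)\chi_1(b)/q|\leq(2n+1)\,q^{n/2}$. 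The principal obstacles are (i) the non-degeneracy check on the critical facet, where the hypothesis $p\nmid n+1$ is indispensable, and (ii) isolating the trivial weight-$0$ reciprocal root of $L^*$ so as to sharpen the constant from $2(n+1)$ to $2n+1$ in the all-equal case.
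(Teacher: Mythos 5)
Your proposal follows essentially the same route as the paper: complete the sum to a twisted toric exponential sum in $n+2$ variables via the additive-character detector, check non-degeneracy of the resulting Laurent polynomial (your $f$ is monomially equivalent to the paper's $x_{n+1}(1-x_{n+2}(x_1+\cdots+x_n+\tfrac{b}{x_1\cdots x_n}))+x_{n+2}$), compute $(n+2)!\Vol(\Delta)=2(n+1)$, apply the Adolphson--Sperber/Denef--Loeser/Fu bound, and in the all-equal case remove a trivial eigenvalue to sharpen $2(n+1)$ to $2n+1$. The non-degeneracy analysis is correct in substance, though the ``delicate facet'' $\{yx_1,\ldots,yx_n,yb/(x_1\cdots x_n)\}$ is actually the codimension-two face common to the two genuine facets (each facet also contains $sy$ or $1/s$, which is why its $s$-partial is a nonvanishing monomial); the paper instead invokes the facial decomposition plus the diagonal criterion $p\nmid\det M(\delta_i)=\pm(n+1)$.

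Two points in your final sharpening step need repair. First, your claim that the remaining $2n+1$ reciprocal roots of $L^*(1,f,T)^{(-1)^{n+1}}$ are pure of weight $n+2$ is false: the paper shows the factorization is $(1-T)(1-qT)\prod_{i=1}^{2n}(1-\beta_iT)$ with $|\beta_i|=q^{(n+2)/2}$, so one of your $2n+1$ roots is $q$, of weight $2$. This does not harm the bound, since Deligne gives weight $\leq n+2$ for every root and that is all the estimate uses, but the purity assertion should be dropped. Second, ``direct inspection of the factorization'' does not by itself show that exactly one reciprocal root equals $1$: the explicit product contributes one factor $(1-T)$, but a priori $L_n(1,b,qT)^{(-1)^{n+1}}$ could have a zero or pole at $T=1$ and cancel or augment it; ruling that out from the weights of $L_n$ is circular at this stage. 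The clean justification (and the one the paper uses) is that the origin is a vertex of $\Delta$ and $f$ has no constant term, so the boundary decomposition of the Fredholm determinant forces a trivial eigenvalue $1$ in the middle cohomology, independently of any knowledge of $L_n$.
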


It is clear that the estimate in Theorem \ref{thm2} is better than the estimate in Theorem \ref{thm0} if  $q> 4(n+1)^2$. 
It is expected that Theorem \ref{thm2} (with possibly a better constant) remains true when $p$ divides $n+1$. But in this singular case, the above toric sum results do not apply and one would need a different approach.

For $p$-adic slopes, we focus on the simpler untwisted case. When $\chi_1=\cdots=\chi_{n+1}$, we study the $p$-adic valuations for the reciprocal roots and poles of the generating L-function of the inverted $n$-variable Kloosterman sum. We show that 
the Newton polygon agrees with the Hodge polygon if and only if $p \equiv 1 \mod n+1$. As a consequence, this completely 
determines the $p$-adic slope sequence when $p \equiv 1 \mod n+1$. This is described more precisely below.

Our approach is to reduce the generating L-function to a certain untwisted toric L-function. To construct the relationship between the two L-functions, we need to consider the inverted Kloosterman sum defined over every finite extension $\F_{q^k}$. 
Suppose $\chi_1=\cdots=\chi_{n+1}$,
the inverted $n$-variable Kloosterman sum over $\F_{q^k}$ is defined by 
\begin{align*}
	S_{k,n}(b)=
	\mathop{\sum_{x_1+\cdots+x_{n}+\frac{b}{x_1\cdots x_{n}}\neq 0}}_{x_i\in\F_{q^k}^*} 
	\psi\left(\Tr_k\left(\frac{1}{x_1+\cdots+x_{n}+\frac{b}{x_1\cdots x_{n}}}\right)\right),
\end{align*}
where $b\in \F^*_{q}$ and $\Tr_k: \F_{q^k} \rightarrow \F_{q}$ is the trace map.
The generating L-function of $S_{k,n}(b)$ is defined by 
\begin{align*}
	\LF_{n}(b,T)=\exp \left(\sum^\infty _ {k=1} S_{k,n}(b) \frac{T^k}{k} \right).
\end{align*}
 Applying some systematic results available for the related toric L-function, we obtain the complex and $p$-adic absolute values for all the reciprocal roots and poles of $\LF_{n}(b,T)$ under given restrictions on $p$.

\begin{Th}\label{thm1}
	Suppose $p\nmid (n+1)$. The L-function is a rational function of the following form:  
	\begin{align*}
		\LF_{n}(b,T)^{(-1)^{n+1}}=(1-T)^{(n+1)}
		\prod_{j=2}^{n}\left(1-q^{j-1}T\right)^{\binom{n}{j}(-1)^{j-1}}	
		\prod^{2n}_{i=1}(1-\alpha_iT).
	\end{align*}
	As complex numbers, the reciprocal roots $\alpha_i$ satisfy $|\alpha_i|= q^{\frac{n}{2}}$ for all $1\leq i\leq 2n$. 
	
	If $p\equiv 1\bmod (n+1)$, viewing the $\alpha_i$'s as $p$-adic numbers, the slope sequence $\{ v_q(\alpha_i)\}_{i=1}^{2n}$ in increasing order is given by 
	$$\{0,1,1,2,2,\ldots,n-1,n-1,n\}.$$
\end{Th}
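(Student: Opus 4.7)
I would realize $\LF_{n}(b,T)$ as the product of a toric L-function on an $(n+2)$-dimensional torus with elementary factors, then invoke the Adolphson--Sperber / Denef--Loeser / Fu machinery together with Dwork's $p$-adic theory. Write $g(x)=x_1+\cdots+x_n+b/(x_1\cdots x_n)$. For each $k$, I enforce $y\,g(x)=1$ by Fourier orthogonality on $\F_{q^k}$; separating the $t=0$ contribution and substituting $s=ty$ in the remainder gives
\[
S_{k,n}(b) \;=\; -\frac{(q^k-1)^n}{q^k}+\frac{1}{q^k}\,S^*_k(F),\qquad F(x,s,t) = \frac{s}{t}+s(x_1+\cdots+x_n)+\frac{sb}{x_1\cdots x_n}-t,
\]
where $S^*_k(F)$ denotes the toric exponential sum attached to the Laurent polynomial $F$ in $n+2$ torus variables. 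Exponentiating in $k$ and simplifying the elementary $(q^k-1)^n/q^k$ part then yields
\[
\LF_{n}(b,T)^{(-1)^{n+1}} = L^*(F,T/q)^{(-1)^{n+1}}\prod_{j=0}^{n}(1-q^{j-1}T)^{\binom{n}{j}(-1)^{j-1}}.
\]

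Under the hypothesis $p\nmid(n+1)$, $F$ is non-degenerate with respect to its Newton polyhedron $\Delta(F)$---the same face-polynomial check underlying Theorem~\ref{thm2}. Adolphson--Sperber and Fu then guarantee that $L^*(F,T)^{(-1)^{n+1}}$ is a polynomial of degree $(n+2)!\,\Vol(\Delta(F))=2(n+1)$ whose reciprocal roots are $q$-Weil numbers. A face-by-face analysis of the codimension-one faces of $\Delta(F)$ not through the origin should produce exactly two trivial reciprocal roots of weights $0$ and $2$; after the substitution $T\mapsto T/q$ these appear at $T=1$ and $T=1/q$ in $L^*(F,T/q)^{(-1)^{n+1}}$. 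Combined with the elementary product, the $j=0$ factor cancels the trivial root at $T=1/q$ while the $j=1$ factor merges with the trivial root at $T=1$ to form $(1-T)^{n+1}$, leaving the claimed rational form with exactly $2n$ non-trivial reciprocal roots $\alpha_i$. Deligne's purity theorem applied to the interior cohomology of $F$ then forces each $\alpha_i$ to be pure of weight $n$, so $|\alpha_i|=q^{n/2}$.

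For the $p$-adic slope sequence under $p\equiv 1\pmod{n+1}$, I would combine the Adolphson--Sperber Hodge lower bound with Wan's ordinary criterion. An explicit star-decomposition of $\Delta(F)$ at the origin, together with a level-by-level lattice-point count, should yield Hodge numbers $h(0)=h(n)=1$ and $h(i)=2$ for $1\le i\le n-1$ on the non-trivial part, matching the predicted slope multiset $\{0,1,1,2,2,\ldots,n-1,n-1,n\}$. The congruence $p\equiv 1\pmod{n+1}$ enters because the primitive inward-normals of certain codimension-one faces of $\Delta(F)$ carry a denominator of $n+1$; the condition makes these denominators $p$-adic units in the Dwork trace formula, which is precisely the $p$-ordinarity criterion ensuring the Newton polygon meets the Hodge polygon.

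The main obstacle will be the Hodge polygon computation together with the face analysis. Unlike the classical Kloosterman simplex, $\Delta(F)\subset\R^{n+2}$ has mixed structure---a Kloosterman-type simplex at level $s=1,\,t=0$ together with two apex monomials $st^{-1}$ and $-t$---and the appearance of Hodge number $2$ at every intermediate level (flagged in the abstract) reflects this extra geometric complexity. Pinning down exactly which codimension-one faces contribute the trivial factors, and carrying out the cone-by-cone lattice-point count giving Hodge number $2$ at each intermediate level, is where the core work will lie.
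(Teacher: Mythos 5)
Your overall route coincides with the paper's: reduce $S_{k,n}(b)$ to a toric sum in $n+2$ torus variables (your $F$ is an invertible monomial substitution away from the paper's $f=x_{n+1}(1-x_{n+2}g(x))+x_{n+2}$), check non-degeneracy for $p\nmid(n+1)$, peel off the elementary binomial product, identify two trivial factors, and settle the slopes via the Hodge polygon together with an ordinariness criterion; your bookkeeping of how the $j=0$ and $j=1$ elementary factors absorb the trivial roots is also correct. However, two steps in your plan would fail as written. First, the purity claim $|\alpha_i|=q^{n/2}$ does not follow from ``Deligne's purity theorem applied to the interior cohomology'': here the origin is a vertex of $\Delta$, not an interior point, and the Adolphson--Sperber/Denef--Loeser/Fu results only give mixedness, i.e.\ weights $w_i\le n+2$ for the $2n$ non-trivial reciprocal roots $\beta_i$ of $\LF^*(f,T)^{(-1)^{n+1}}$. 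Purity is precisely what has to be proved. The paper closes this gap by applying the endpoint coincidence of the Newton and Hodge polygons to the product $\LF^*(f,T)^{(-1)^{n+1}}\,\overline{\LF^*(f,T)}^{(-1)^{n+1}}$: this forces $\ord_q\bigl(\prod_i\beta_i\overline{\beta_i}\bigr)=\sum_i w_i=2n(n+2)$, and since each $w_i\le n+2$ every inequality must be an equality. You need this (or an explicit Denef--Loeser-type weight count) to conclude.

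Second, your explanation of the congruence $p\equiv1\bmod(n+1)$ is wrong. The denominator of this polytope is $D(\Delta)=1$ (the two codimension-one faces off the origin are $x_{n+1}=1$ and $x_{n+2}=1$), so no face normal carries a denominator $n+1$; and in any case ``being a $p$-adic unit'' would only require $p\nmid(n+1)$, which is strictly weaker than the congruence and is exactly the non-degeneracy condition, not the ordinariness condition. The actual mechanism is the diagonal local theory: each restriction $f^{\delta_i}$ is diagonal with vertex matrix of determinant $\pm(n+1)$, the associated group $S(\delta_i)$ is cyclic of order $n+1$, and the Stickelberger ordinariness criterion requires the norm function on $S_p(\delta_i)$ to be stable under multiplication by $p$; testing the element $(\tfrac{1}{n+1},\ldots,\tfrac{1}{n+1},0)$ of norm $1$ yields exactly $p\equiv1\bmod(n+1)$. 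A smaller slip: the trivial factors $(1-T)(1-qT)$ come from the boundary decomposition at the origin (the $0$- and $1$-dimensional cones of $C(\Delta)$), not from the codimension-one faces missing the origin, which are instead used for non-degeneracy and for the facial decomposition in the ordinariness and Hodge-number computations.
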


Our results show that for the {\it inverted} $n$-variable Kloosterman sum, if $p$ does not divide $n+1$, the primitive middle cohomology has dimension $2n$, pure of weight $n$ and  with Hodge numbers $\{1, 2, 2, \cdots, 2, 1\}$. This is in contrast 
to the classical $n$-variable Kloosterman sum, where the middle cohomology has dimension $n+1$, pure of weight $n$ 
and with Hodge numbers $\{1, 1, 1, \cdots, 1, 1\}$. The larger Hodge numbers suggest that new features and additional difficulties would likely arise in studying {\it inverted} $n$-variable Kloosterman sums.  

As a corollary of the first part in Theorem \ref{thm1}, we get a slightly better bound for $S_{k,n}(b)$.
\begin{Cor}\label{cor1}
	If $p\nmid (n+1)$, for all integers $k\geq 1$, we have
	\begin{align*}
		|S_{k,n}(b)+\frac{(q^k-1)^n-(-1)^n(q^k+1)}{q^k}|\leq 2nq^{\frac{nk}{2}}.
	\end{align*}
\end{Cor}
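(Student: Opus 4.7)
The plan is to read off $S_{k,n}(b)$ directly from the factorization of $\LF_n(b,T)$ provided by Theorem \ref{thm1} and then identify the polynomial main term explicitly. By definition
$$\log \LF_n(b,T) = \sum_{k\ge 1} S_{k,n}(b) \frac{T^k}{k},$$
so applying $\log(1-xT) = -\sum_{k\ge 1} x^k T^k/k$ to each factor in the factorization from Theorem \ref{thm1} and matching coefficients of $T^k/k$ yields
$$S_{k,n}(b) = (-1)^n\left[(n+1) + \sum_{j=2}^n \binom{n}{j}(-1)^{j-1} q^{k(j-1)} + \sum_{i=1}^{2n} \alpha_i^k\right].$$

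Next, I would simplify the inner sum $\sum_{j=2}^n \binom{n}{j}(-1)^{j-1} q^{k(j-1)}$ using the binomial identity $(1-q^k)^n = \sum_{j=0}^n \binom{n}{j}(-q^k)^j$. Subtracting the $j=0$ and $j=1$ contributions and factoring out $1/q^k$, this sum equals $\frac{1-(1-q^k)^n}{q^k} - n$. Substituting back into the formula for $S_{k,n}(b)$, collecting the constant and using $(1-q^k)^n = (-1)^n(q^k-1)^n$, the deterministic part collapses exactly to
$$S_{k,n}(b) + \frac{(q^k-1)^n-(-1)^n(q^k+1)}{q^k} = (-1)^n \sum_{i=1}^{2n} \alpha_i^k.$$

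Finally, the complex bound $|\alpha_i| = q^{n/2}$ from Theorem \ref{thm1}, together with the triangle inequality, gives $\bigl|\sum_{i=1}^{2n}\alpha_i^k\bigr|\le 2nq^{nk/2}$, which yields the corollary. There is no genuine obstacle beyond Theorem \ref{thm1}; the only delicate point is the combinatorial bookkeeping in the second step, namely verifying that the binomial identity reproduces the claimed main term $\frac{(q^k-1)^n-(-1)^n(q^k+1)}{q^k}$ with the correct sign conventions for both parities of $n$.
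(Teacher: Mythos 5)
Your proposal is correct and is essentially the paper's argument: the paper reads the corollary off the factorization of $\LF^*(f,T)^{(-1)^{n+1}}$ (Theorem \ref{thm35}) together with the elementary relation (\ref{eq32}) between $S_{k,n}(b)$ and $S_k^*(f)$, which packages the main term $\frac{(q^k-1)^n}{q^k}$ from the start, while you work with the equivalent factorization of $\LF_n(b,T)$ itself and recover the same main term by the binomial identity. Your bookkeeping checks out, so the two routes differ only in where the elementary algebra is performed.
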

When $k=1$, the exponential sum $S_{k,n}(b)$ reduces to $S_n(\chi,b)$ with $\chi_1=\cdots=\chi_{n+1}$. 
Explicitly, the estimate in Corollary \ref{cor1} is better than the estimate in the  first case of Theorem \ref{thm2}. 
We remark that the condition $p\equiv 1\bmod (n+1)$ in the second part of Theorem \ref{thm1} is necessary and 
sufficient for the same conclusion to hold. Thus, if $p\not\equiv 1\bmod (n+1)$, the slope sequence will be 
strictly different, but we do not know the exact slope sequence in this case. 


The rest of this paper is organized as follows. In section \ref{sec2}, we review some technical methods including Adolphson-Sperber's theorems and Dwork's theory on toric exponentials sums. In section \ref{sec3}, we use these methods to prove the main results. 

We end the 
introduction by mentioning several recent references \cite{FW2021}\cite{Li2021}\cite{YZ2022}\cite{CL2022}\cite{LC2022} which applied some of the related toric techniques in treating different classes of {\it non-toric} $n$ variable exponential sums arising from analytic number theory.

\section{Preliminaries on toric exponential sums}\label{sec2}
\subsection{Rationality of the toric L-function}\label{subsec21}
Let $f\in\F_{q}\left[x_1^{\pm1}, \ldots, x_n^{\pm1} \right]$ be a Laurent polynomial and its associated twisted toric exponential sum is defined to be 
\begin{align}\label{equ1}
	S^*_k(\chi,f)=\sum_{x_i \in \F^*_{q^k}} \chi_1(\N_k(x_1))\cdots\chi_n(\N_k(x_n))\psi(\Tr_k(f)),
\end{align}
where $\Tr_k: \F_{q^k} \rightarrow \F_{q}$ is the trace map, $\N_k: \F_{q^k} \rightarrow \F_{q}$ is the norm map, $\chi_1,\ldots,\chi_n: \F_{q}^* \rightarrow \mathbb{C^*}$ are multiplicative characters and $\psi: \F_{q} \rightarrow \mathbb{C}^*$ is a nontrivial additive character. 
A classical problem in number theory is to estimate the absolute values of $S^*_k(\chi,f)$. 

A well known theorem of Dwork-Bombieri-Grothendieck\cite{Dwo1960,Bom1966a,Gro1968} says that the generating L-function of $S^*_k(\chi,f)$ is a rational function:
\begin{equation*}
	\LF^*(\chi,f,T)=\exp  \left(\sum^\infty _ {k=1} S^*_k(\chi,f) \frac{T^k}{k} \right)
	=\frac{\prod^{d_1}_{i=1}(1-\alpha_iT)}{\prod^{d_2}_{j=1}(1-\beta_jT)},
\end{equation*}
where all the reciprocal zeros and poles are non-zero algebraic integers. 
In particular, when all of $\chi_i$ are trivial characters, the untwisted toric exponential sum and the associated L-function are denoted by $S^*_k(f)$ and $\LF^*(f,T)$ respectively.

Through logarithmic derivatives, we have
\begin{align}\label{expo}
	S^*_k(\chi,f)=\sum^{d_2}_{j=1}\beta_j^k-\sum^{d_1}_{i=1}\alpha_i^k, \quad k \in \Z_{\geq1}.
\end{align}
Thus, the estimate of $S^*_k(\chi,f)$ is reduced to understanding all the absolute values of the reciprocal zeros $\alpha_i$ and poles $\beta_j$.
Deligne's theorem on Riemann hypothesis \cite{Del1980} describes the bounds for all the absolute values of $\alpha_i$ and $\beta_j$ in general. The complex absolute values of reciprocal zeros and poles 
satisfy 
\begin{equation*}
	|\alpha_i|=q^{u_i/2}, |\beta_j|=q^{v_j/2}, u_i\in \Z \cap [0,2n], v_j\in \Z \cap [0,2n].
\end{equation*}
For non-archimedean absolute values, Deligne proved that $|\alpha_i|_\ell=|\beta_j|_\ell=1$ when $\ell$ is a prime and $\ell\neq p$. For $p$-adic absolute values, one has
\begin{equation*}
	|\alpha_i|_p=q^{-r_i}, |\beta_j|_p=q^{-s_j}, r_i\in \Q \cap [0,n], s_j\in \Q \cap [0,n].
\end{equation*}
The integer $u_i$ (resp. $v_j$) is called the \textit{weight} of $\alpha_i$ (resp. $\beta_j$) and the rational number $r_i$
(resp. $s_j$) is called the \textit{slope} of $\alpha_i$ (resp. $\beta_j$).

In the past few decades, there has been tremendous interest in determining the weights and slopes of the generating L-functions. Without any further condition on the Laurent polynomial $f$, it is even hard to determine the number of reciprocal roots and poles. Most of the existing work about the weights and slopes relies on a suitable smoothness condition. For toric exponential sums, 
this usually means the non-degenerate condition, see below for the precise definition.

Let 
\begin{equation}\label{equ11}
	f(x_1, \ldots x_n)=\sum^J_{j=1} a_j x^{V_j}
\end{equation}
be a Laurent polynomial with $a_j \in \F ^*_{q}$ and $V_j=(v_{1j},\ldots, v_{nj})\in \Z^n$ $(1\leq j\leq J)$. The \emph{Newton polyhedron} of $f$, $\Delta(f)$, is defined to be the convex closure in $\R^n$ generated by the origin and the lattice points $V_j$ ($1\leq j \leq J$). For $\delta \subset \Delta(f)$, let the Laurent polynomial
\begin{equation*}
	f^{\delta}=\sum_{V_j\in \delta}a_j x^{V_j}
\end{equation*}
be the restriction of $f$ to $\delta$. 
\begin{Def}[non-degenerate]\label{def1}
	A Laurent polynomial $f$ is called non-degenerate if for each closed face $\delta$ of $\Delta(f)$ of arbitrary dimension which doesn't contain the origin, the partial derivatives 
	\begin{equation*}
		\left\{ \frac{\partial f^{\delta}}{\partial x_1 }, \ldots, \frac{\partial f^{\delta}}{\partial x_n} \right\}
	\end{equation*} 
	have no common zeros with $x_1\ldots x_n \neq 0$ over the algebraic closure of $\F_q$.
\end{Def}

When $f$ is non-degenerate, Adolphson-Sperber\cite{AS1989} proved that the untwisted toric L-function $\LF^*(f,T)^{(-1)^{n-1}}$ is a polynomial and improved the bound for weight.

\begin{Th}[\cite{AS1989}]\label{th2}
	For any non-degenerate $f\in  \F_{q}[x_1^{\pm1},\ldots, x_n^{\pm1}]$, the associated L-function $\LF^*(f,T)^{(-1)^{n-1}}$ is a polynomial of degree $n!\Vol(\Delta(f))$. Namely, 
	\begin{equation*}
		\LF^*(f,T)^{(-1)^{n-1}}=\prod^{n! \Vol(\Delta(f))}_{i=1}(1-\alpha_i T), \ \alpha_i\not=0. 
	\end{equation*}
	\end{Th}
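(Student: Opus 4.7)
The plan is to follow Dwork's $p$-adic cohomology strategy as refined by Adolphson--Sperber: express the L-function in terms of the characteristic polynomial of Frobenius on a Koszul complex, show the complex is cohomologically concentrated in top degree when $f$ is non-degenerate, and compute the top dimension as a Newton-polytope volume.

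First I would fix a Dwork splitting function $\theta(z)=\exp(\pi(z-z^p))$, where $\pi^{p-1}=-p$, encoding the additive character $\psi$, and lift the coefficients of $f$ to Teichm\"uller representatives $\hat a_j$. Setting $F(x)=\prod_j \theta(\hat a_j x^{V_j})$, one obtains a $p$-adic analytic function whose growth is controlled by $\Delta(f)$. A Frobenius-type operator $\alpha$, built by composing the $p$-th power substitution with multiplication by $F$, acts on a $p$-adic Banach space $L(\Delta)$ of overconvergent power series supported on the cone $C(\Delta(f))\cap\Z^n$, weighted by the cone-polyhedron function of $\Delta(f)$. Dwork's trace formula then expresses $\LF^*(f,T)^{(-1)^{n-1}}$ as an alternating product of characteristic polynomials of $\alpha$ on the cohomology groups $H^i(K^\bullet)$ of a Koszul complex on $L(\Delta)$ whose differentials $D_i = x_i\,\partial/\partial x_i + x_i\,\partial F/\partial x_i$ are the Dwork-twisted logarithmic derivations.

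The heart of the argument is to show that $H^i(K^\bullet)=0$ for $i<n$. I would attack this via a graded/filtered reduction: the associated graded of $L(\Delta)$ with respect to the weight filtration coming from $\Delta(f)$ is a polynomial-like ring on the cone, and the associated graded differentials act by multiplication by $x_j\,\partial f^\delta/\partial x_j$ restricted to the appropriate face $\delta$. The non-degeneracy hypothesis says exactly that, for every closed face $\delta$ not containing the origin, these logarithmic partials have no common torus zero, so they form a regular sequence on the associated graded ring. Standard Koszul theory then gives cohomological vanishing in positive degree on the graded object, and a filtration/spectral-sequence argument lifts this to $H^i(K^\bullet)=0$ for $i<n$.

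Finally, $\dim H^n(K^\bullet)$ equals the Euler characteristic of $K^\bullet$, which I would compute as a Hilbert function of the graded Jacobian quotient attached to the cone over $\Delta(f)$. A standard lattice-point count in the dilates $k\Delta(f)$, in the spirit of the Bernstein--Kushnirenko theorem, evaluates this dimension to $n!\Vol(\Delta(f))$. The hard part, as one would expect, is the cohomological vanishing: propagating the face-by-face non-degeneracy on $\Delta(f)$ into a genuine regular-sequence statement that survives the transition from the associated graded back to the full overconvergent $p$-adic Banach complex requires careful bookkeeping of the growth weights through each Koszul differential, and this is what ultimately forces the hypothesis to appear in exactly the form given in Definition \ref{def1}.
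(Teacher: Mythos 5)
Theorem~\ref{th2} is not proved in the paper at all: it is imported verbatim from Adolphson--Sperber \cite{AS1989}, so there is no in-paper argument to compare against, and your proposal has to be judged against the original. Judged that way, your outline reproduces the architecture of the actual proof faithfully: Dwork's trace formula on a weighted Banach space of overconvergent series indexed by $C(\Delta(f))\cap\Z^n$, a twisted Koszul (de Rham--type) complex with differentials $D_i$, acyclicity below degree $n$ forced by non-degeneracy, and $\dim H^n = n!\Vol(\Delta(f))$ by a lattice-point/Hilbert-series count.

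Three places where the sketch glosses over real content. First, the splitting function must be the Artin--Hasse version $\theta(t)=E_p(\pi t)$ recalled in Section~\ref{subsubsec232}, not $\exp(\pi(z-z^p))$: the latter does not have enough overconvergence to make $F(f,x)$ act completely continuously on the weighted spaces attached to an arbitrary $\Delta$, and this choice is load-bearing, not cosmetic. Second, the associated-graded step is precisely Kouchnirenko's theorem: the leading forms of the $D_i$ for the weight filtration are multiplication by $\pi x_i\,\partial \hat f^{(1)}/\partial x_i$, where $\hat f^{(1)}$ is the part of the lift supported on the faces not containing the origin; that these form a regular sequence on the cone's semigroup ring uses that this ring is Cohen--Macaulay (Hochster, since the semigroup is saturated) together with a face-by-face toric Nullstellensatz showing the quotient is finite-dimensional --- this is exactly where Definition~\ref{def1} enters, and it is the hard step you correctly flag, though ``standard Koszul theory'' understates the work needed to descend from the closed faces of $\Delta$ to the orbit decomposition of the cone. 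Third, the statement as reproduced in the paper silently assumes $\dim\Delta(f)=n$: otherwise $n!\Vol(\Delta(f))=0$ while $\LF^*(f,T)^{(-1)^{n-1}}$ need not equal $1$ (e.g.\ $f=x_1$ viewed in two variables is non-degenerate and gives $(1-T)/(1-qT)$ for $\LF^*$), and your concluding Bernstein--Kushnirenko count also presupposes full dimension. With these provisos, your plan is the correct and standard one.
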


 For any multiplicative characters $\chi_i$ (nontrivial or trivial), the degree and weights of the twisted toric L-function are studied and completed by Adolphson-Sperber\cite{AS1991,AS1993}, \cite{DL1991}and Fu\cite{Fu2009} under the non-degeneracy assumption. These results lead to the following bound for the twisted toric exponential sum.

\begin{Th}[\cite{DL1991}\cite{AS1993}\cite{Fu2009}]\label{thm3}
	Let $f\in\F_{q}\left[x_1^{\pm1}, \ldots, x_n^{\pm1} \right]$ be a Laurent polynomial with $\Delta=\Delta(f)$. If $f$ is non-degenerate, one has
	\begin{equation*}
		\left|S^*_k(\chi_1,\ldots,\chi_n,f)\right|\leq n!\Vol(\Delta)q^{\frac{nk}{2}}.
	\end{equation*}
\end{Th}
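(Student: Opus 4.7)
The plan is to derive the bound as a direct consequence of three inputs: the rationality of the twisted toric L-function, a polynomial shape theorem (extending Theorem \ref{th2} to arbitrary characters) under the non-degeneracy hypothesis, and Deligne's purity theorem. By the Dwork-Bombieri-Grothendieck rationality already recalled in Section \ref{subsec21}, one may write
$\LF^*(\chi,f,T) = \prod_i(1-\alpha_i T) / \prod_j(1-\beta_j T)$,
and formula (\ref{expo}) yields $S^*_k(\chi,f) = \sum_j \beta_j^k - \sum_i \alpha_i^k$. So everything reduces to (i) controlling the total number of reciprocal zeros and poles, and (ii) bounding the archimedean absolute value of each of them.

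First I would upgrade Theorem \ref{th2} from the untwisted setting to the twisted setting: under the non-degeneracy of $f$, the L-function $\LF^*(\chi,f,T)^{(-1)^{n-1}}$ is a \emph{polynomial} (i.e.\ there are no $\beta_j$'s) of degree exactly $n!\Vol(\Delta)$. This is precisely the combined content of the papers of Adolphson-Sperber, Denef-Loeser, and Fu. The Adolphson-Sperber approach constructs a twisted Dwork complex on the torus in which the multiplicative characters $\chi_i$ enter as twists of the Frobenius, proves that this complex is acyclic outside middle degree using the non-degeneracy along every face of $\Delta$ not containing the origin, and computes the Euler characteristic to be $(-1)^{n-1} n!\Vol(\Delta)$ via a Koszul-type resolution whose graded pieces are controlled by the Newton polyhedron filtration. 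The Denef-Loeser/Fu approach does the analogous thing in $\ell$-adic cohomology with a Kummer sheaf attached to the $\chi_i$. In both approaches, non-degeneracy is what forces the cohomology to be concentrated in a single degree, which in turn prevents any poles $\beta_j$ and pins the number of $\alpha_i$'s to $n!\Vol(\Delta)$.

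Second, I would invoke Deligne's theorem on the Riemann hypothesis to conclude that each $\alpha_i$ coming from this middle-degree cohomology is pure of weight $n$, i.e.\ $|\alpha_i| = q^{n/2}$ as a complex number, exactly as recorded in Section \ref{subsec21}. Combining these two inputs with (\ref{expo}) (in which now $d_2 = 0$ and $d_1 = n!\Vol(\Delta)$) and applying the triangle inequality gives
$|S^*_k(\chi,f)| \leq \sum_{i=1}^{n!\Vol(\Delta)} |\alpha_i|^k \leq n!\Vol(\Delta)\, q^{nk/2}$,
which is the claimed estimate.

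The main obstacle, which I would simply cite rather than redo, is the first step: establishing the cohomological vanishing and the sharp degree count $n!\Vol(\Delta)$ for the twisted L-function. This is genuinely deeper than the untwisted case because the Kummer twists can a priori produce extra boundary contributions on faces meeting the coordinate hyperplanes, and one needs the full strength of the face-by-face non-degeneracy condition (Definition \ref{def1}) to kill them. Once the polynomial shape is granted, the purity input of Deligne and the triangle inequality close the argument immediately.
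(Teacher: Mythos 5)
Your proposal matches the paper's treatment: Theorem \ref{thm3} is presented there purely as a consequence of the cited Adolphson--Sperber, Denef--Loeser and Fu results on the polynomiality and degree $n!\Vol(\Delta)$ of the twisted L-function $\LF^*(\chi,f,T)^{(-1)^{n-1}}$, combined with Deligne's weight bound and the triangle inequality via (\ref{expo}), exactly as you describe. One small caveat: you only need (and in general only have) $|\alpha_i|\leq q^{n/2}$ from Weil II for the middle compactly supported cohomology --- full purity of weight $n$ can fail for general non-degenerate $f$, the exact weights being given by Denef--Loeser's combinatorial formula --- but this overstatement does not affect the bound.
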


For the slopes of the L-function, the situation is somewhat simpler in the untwisted case, 
otherwise, even the description of Adolphson-Sperber's ``Hodge lower bound" is a little cumbersome.  
Thus, the definitions and theories discussed in the following subsections focus on the untwisted L-function.

\subsection{Newton polygon and Hodge polygon}\label{subsec22}
To determine the $q$-adic slopes of its reciprocal roots, we introduce the $q$-adic Newton polygon.
\begin{Def}[Newton polygon]\label{m1}
	Let $\LF(T)=\sum^n_{i=0}a_i T^i$ $\in 1+T\overline{\Q}_p[T]$, where $\overline{\Q}_p$ is the algebraic closure of $\Q_p$. The $q$-adic Newton polygon of\ $\LF(T)$ is defined to be the lower convex closure of the set of points $\{\left(k,\ord_q(a_k)\right) | k=0, 1,\ldots, n \}$ in $\R^2$.
\end{Def}
\begin{Lemma}[\cite{koblitz2012p}]\label{le4}
	Notations as above. Let $\LF(T)=(1-{\alpha_1}T)\ldots(1-{\alpha_n}T)$ be the factorization of $\LF(T)$ in terms of reciprocal roots $\alpha_i \in \overline{\Q}_p$. Let $\lambda_i=\ord_q\alpha_i$. If $\lambda$ is the slope of the $q$-adic Newton polygon of $L(T)$ with horizontal length $l$, then precisely $l$ of the $\lambda_i$ are equal to $\lambda$.
\end{Lemma}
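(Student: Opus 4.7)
The plan is to use the classical argument comparing the elementary symmetric polynomials in the reciprocal roots to the lower convex hull of their $q$-adic valuations. First, I would relabel the reciprocal roots so that $\lambda_1 \leq \lambda_2 \leq \cdots \leq \lambda_n$. Expanding the given factorization,
\begin{equation*}
\LF(T) = \prod_{i=1}^{n}(1-\alpha_i T) = \sum_{k=0}^{n} (-1)^k e_k(\alpha_1,\ldots,\alpha_n)\, T^k,
\end{equation*}
one reads off $a_k = (-1)^k e_k(\alpha_1,\ldots,\alpha_n)$, the signed $k$-th elementary symmetric polynomial in the $\alpha_i$.

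Next I would apply the non-Archimedean triangle inequality to $e_k$. Each summand $\alpha_{i_1}\cdots\alpha_{i_k}$ has $q$-adic valuation $\lambda_{i_1}+\cdots+\lambda_{i_k} \geq \lambda_1+\cdots+\lambda_k$, so
\begin{equation*}
\ord_q(a_k) \geq \lambda_1 + \cdots + \lambda_k.
\end{equation*}
Consequently, the $q$-adic Newton polygon of $\LF(T)$ lies on or above the polygon $P$ with vertices $(k,\lambda_1+\cdots+\lambda_k)$; by construction, $P$ exhibits each slope $\lambda$ with horizontal length equal to the multiplicity of $\lambda$ in the multiset $\{\lambda_1,\ldots,\lambda_n\}$.

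The remaining task is to show that the two polygons coincide, which reduces to verifying equality $\ord_q(a_k) = \lambda_1+\cdots+\lambda_k$ at every vertex of $P$, i.e., at every index $k$ with $\lambda_k < \lambda_{k+1}$ (including the endpoints $k=0$ and $k=n$). At such a vertex the subset $\{1,\ldots,k\}$ is the unique $k$-element subset of $\{1,\ldots,n\}$ achieving the minimum valuation sum; every other $k$-subset contributes a term of strictly larger valuation. Hence the ultrametric inequality is sharp at these indices, forcing $\ord_q(a_k) = \lambda_1+\cdots+\lambda_k$ exactly. Reading off the slopes of the Newton polygon of $\LF(T)$ then gives the claimed correspondence between slopes and multiplicities.

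The main (and essentially only) obstacle is justifying the uniqueness of the minimizing $k$-subset at vertices of $P$, which must be handled carefully when there are ties among the $\lambda_i$ away from the vertex. Once that combinatorial observation is in hand, the ultrametric triangle inequality yields the lemma with no further work, so no $p$-adic machinery beyond the valuation on $\overline{\Q}_p$ is needed.
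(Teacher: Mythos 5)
Your argument is correct and is precisely the classical proof of this lemma (it is the argument given in the cited reference of Koblitz); the paper itself supplies no proof, quoting the result as standard. The one point you flag as delicate — uniqueness of the minimizing $k$-subset at a vertex $k$ with $\lambda_k<\lambda_{k+1}$ — does go through: any other $k$-subset must contain some index $j>k$ and omit some index $i\leq k$, and swapping $j$ for $i$ strictly lowers the valuation sum by at least $\lambda_{k+1}-\lambda_k>0$, so ties among the $\lambda_i$ within $\{1,\dots,k\}$ are harmless and the ultrametric equality holds at every vertex.
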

The $q$-adic Newton polygon of $\LF^*(f,T)^{(-1)^{n-1}}$ is denoted as NP($f$). 
Lemma \ref{le4} relates NP($f$) to the $q$-adic valuation of reciprocal roots of toric L-functions. The definition of NP($f$) relies on the coefficients of L-function, which makes it hard to compute directly. When $f$ is non-degenerate, Adolphson and Sperber proved that $\LF^*(f,T)^{(-1)^{n-1}}$ is a polynomial and NP($f$) has a topological lower bound called Hodge polygon, which is easier to determine. Thus, we shall compute Hodge polygon and consider when the Newton polygon coincides with this lower bound. 

Let $\Delta$ be an $n$-dimensional integral polytope containing the origin in $\R^n$. For $u\in \R^{n}$, the weight function $w(u)$ represents the smallest non-negative real number $c$ such that $u\in c\Delta$. Denote $w(u)=\infty$ if such $c$ doesn't exist. Assume $\delta$ is a co-dimension 1 face of $\Delta$ not containing the origin. Let $D(\delta)$ be the least common multiple of the denominators of the coefficients in the linear equation defining $\delta$, normalized to have constant term 1.
We define the denominator of $\Delta$ to be the least common multiple of all such $D(\delta)$ given by:
\begin{equation*}
	D=D(\Delta)= \mathrm{lcm}_{\delta}D(\delta),
\end{equation*}
where $\delta$ runs over all the co-dimension 1 faces of $\Delta$ that don't contain the origin. It's easy to check 
\begin{equation*}
	w(\Z^n)\subseteq \frac{1}{D(\Delta)}\Z_{\geq0}\cup \{ + {\infty} \}.
\end{equation*} 
For a non-negative integer $k$, let 
\begin{equation}\label{eqW}
	W_{\Delta}(k)=\# \left\{ u \in \Z^n | w(u)= \frac{k}{D} \right\}
\end{equation} 
be the number of lattice points in $\Z^n$ with weight $k/D$. Its generating function is known to be a rational function 
of the following form 
$$\sum_{k=0}^{\infty} W_{\Delta}(k) t^{k/D} = \frac{\sum_{k=0}^{nD} H_{\Delta}(k) t^{k/D}}{(1-t)^n}.$$
This leads to 
\begin{Def}[Hodge number]\label{def5}
	Let $\Delta$ be an $n$-dimensional integral polytope containing the origin in $\R^n$. For a non-negative integer $k$, the $k$-th Hodge number of $\Delta$ is defined to be
	\begin{align}\label{eq3}
		H_{\Delta}(k)=\sum^{n}_{i=0}(-1)^i \binom{n}{i}W_{\Delta}(k-iD).
	\end{align}
\end{Def}
It is known that 
\begin{equation*}
	H_{\Delta}(k)=0, \quad \text{if}\quad k>nD.
\end{equation*}
Based on the Hodge numbers, we define the Hodge polygon of a given polyhedron $\Delta\in \R^n$ as follows.
\begin{Def}[Hodge polygon]\label{def6}
	The Hodge polygon HP($\Delta$) of $\Delta$ is the lower convex polygon in $\R^2$ with vertices (0,0) and 
	\begin{equation*}
		\Q_k=\left( \sum^k_{m=0}H_{\Delta}(m), \frac{1}{D}\sum^k_{m=0}m H_{\Delta}(m) \right), \quad  k=0,1,\ldots, nD,
	\end{equation*}
	where $H_{\Delta}(k)$ is the $k$-th Hodge number of $\Delta$, $k=0,1,\ldots, nD.$
	
	That is, HP($\Delta$) is a polygon starting from origin (0,0) with a slope $k/D$ side of horizontal length $H_{\Delta}(k)$ for $k=0,1,\ldots, nD$. The vertex $\Q_k$ is called a break point if $H_{\Delta}(k+1)\neq 0$ where $k=1,2,\ldots,nD-1$. 
\end{Def}
Note that the horizontal length $H_{\Delta}(k)$ is the number of  lattice points of weight $k/D$ in a certain fundamental domain corresponding to a basis of the $p$-adic cohomology space used to compute the L-function. By a theorem of Adolphson-Sperber, the Hodge polygon is a lower bound of the corresponding Newton polygon. 
\begin{Th}[\cite{AS1989}]\label{thas}
	For every prime p and non-degenerate Laurent polynomial $f$ with $\Delta(f)=\Delta \subset \R^n$, we have 
	\begin{equation*}
		\text{NP}(f) \geq \text{HP}(\Delta),
	\end{equation*}
	where NP($f$) is the $q$-adic Newton polygon of\ \ $\LF^*(f,T)^{(-1)^{n-1}}.$
	Furthermore, the endpoints of NP($f$) and NP($\Delta$) coincide.
\end{Th}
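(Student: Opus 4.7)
The plan is to prove Theorem \ref{thas} via Dwork's $p$-adic cohomology, following the strategy of Adolphson-Sperber. The idea is to realize $\LF^*(f,T)^{(-1)^{n-1}}$ as the characteristic polynomial of a completely continuous Frobenius operator acting on a $p$-adic Banach space that is naturally graded by the weight function $w$, and then to exploit this grading to bound the $q$-adic valuations of the coefficients.

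First I would set up the Banach space. Choose $\pi\in\overline{\Q}_p$ with $\pi^{p-1}=-p$, so that Dwork's splitting function $\theta(y)=\exp(\pi(y-y^p/p))$ reduces modulo $\pi$ to a nontrivial additive character. Replace each coefficient $a_j$ of $f$ by its Teichmüller lift $\hat a_j$, let $C(\Delta)\subset\R^n$ be the cone over $\Delta$, and for a suitable $b>0$ introduce the weighted space
$$
\mathcal{L}(b)=\Bigl\{\sum_{u\in C(\Delta)\cap\Z^n}c_u\,x^u\;:\;c_u\in\overline{\Q}_p,\ \ord_p(c_u)\geq b\,w(u)\Bigr\}.
$$
The monomials $\{x^u\}$ give a filtered basis whose number of elements of weight $k/D$ in the appropriate fundamental domain is exactly $H_\Delta(k)$ from \eqref{eqW}. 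Form $E_F(x)=\prod_{j=1}^{J}\theta(\hat a_j x^{V_j})$ and set $\alpha=q^{-n}\,\psi_q\circ E_F$, where $\psi_q(x^u)=x^{u/q}$ if $q\mid u$ and $0$ otherwise. Standard estimates show $\alpha$ is completely continuous on $\mathcal{L}(b)$, and the Dwork-Monsky trace formula yields $(q^k-1)^n\,\mathrm{Tr}(\alpha^k\mid\mathcal{L}(b))=S^*_k(f)$.

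Next I would pass to cohomology. Consider the Koszul complex built from the twisted logarithmic derivatives $D_i=x_i\partial/\partial x_i+x_i(\partial F/\partial x_i)$; non-degeneracy is exactly what forces this complex to be acyclic in degrees below $n$, so the alternating product of Fredholm determinants coming from the trace formula collapses to a single polynomial $\det(I-T\alpha\mid H^n)$ of degree $n!\Vol(\Delta)$ (giving Theorem \ref{th2}). Now choose a monomial basis of $H^n$ whose weight multiset realizes $H_\Delta(0),\ldots,H_\Delta(nD)$. The essential $p$-adic estimate, which uses Dwork's bound $\ord_p(\theta_i)\geq i(p-1)/p$ on the Taylor coefficients of $\theta$, is
$$
\ord_q(\alpha_{u,v})\geq w(u)\qquad\text{for all basis vectors }x^u,x^v.
$$
A Laplace/Hadamard expansion of $\det(I-T\alpha\mid H^n)$ then forces the $k$-th coefficient to have $q$-adic valuation at least the ordinate of the $k$-th vertex of $\mathrm{HP}(\Delta)$, which is the inequality $\mathrm{NP}(f)\geq\mathrm{HP}(\Delta)$. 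For the endpoint coincidence, the horizontal lengths agree by Theorem \ref{th2}, and the right-endpoint ordinate equals $\ord_q\det(\alpha\mid H^n)=\frac{1}{D}\sum_k kH_\Delta(k)$, which can be read off directly from the top-weight diagonal contribution.

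The main obstacle is the matrix-entry estimate $\ord_q(\alpha_{u,v})\geq w(u)$. Making this precise requires carefully tracking how the $p$-adic decay of $\theta$ propagates through the product $E_F=\prod_j\theta(\hat a_j x^{V_j})$ and then through $\psi_q$: any exponent $qu$ that appears in $E_F\cdot x^v$ must come from a representation $qu-v=\sum n_j V_j$ with $n_j\geq 0$, and the combinatorial identity $w(qu-v)\geq qw(u)-w(v)$ together with $\sum n_j\geq Dw(qu-v)$ converts the splitting-function bound into the required weight estimate after the $q^{-n}$ normalization. The endpoint equality is also subtle: it is not automatic from the inequality and must be obtained by exhibiting an eigenvector of maximal slope attached to the unique top-weight monomial, or equivalently by invoking Adolphson-Sperber's exact computation of the top Hodge number.
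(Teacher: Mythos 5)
This theorem is quoted from Adolphson--Sperber \cite{AS1989}; the paper gives no proof, only recalling the underlying machinery in Section \ref{subsec23} (the weight-graded chain-level Frobenius $A_1(f)$, its block form, the polygon $P(\Delta)$, and Proposition \ref{prop c}). Your sketch follows the same Dwork-theoretic strategy, and the estimate you isolate as essential is exactly the inequality $\ord_p(a_{r,s}(f))\ge w(s)$ displayed after Theorem \ref{th211}. But there are two genuine gaps. First, that estimate lives on the full chain space, where the number of basis monomials of weight $k/D$ is $W_\Delta(k)$ from \eqref{eqW}, not the Hodge number $H_\Delta(k)$ (you conflate the two in your first paragraph). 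To reach the Hodge polygon, whose horizontal lengths are the finite differences $H_\Delta(k)=\sum_i(-1)^i\binom{n}{i}W_\Delta(k-iD)$ of \eqref{eq3}, one must pass through the alternating product \eqref{eq24}/\eqref{eq25} relating $\det(I-TA_a(f))$ to $\LF^*(f,T)^{(-1)^{n-1}}$; it is precisely this combinatorial cancellation that converts the chain-level bound $P(\Delta)$ into $\mathrm{HP}(\Delta)$ (the content of Proposition \ref{prop c}). Your shortcut --- ``choose a monomial basis of $H^n$ whose weight multiset realizes $H_\Delta(k)$'' and estimate the Frobenius matrix directly there --- is not justified: the inequality $\ord\ge w$ is proved for the chain-level operator, and you would have to show it survives the passage to a cohomology basis, which is not automatic and is not how the argument is actually run.

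Second, the endpoint statement cannot be extracted from your one-sided estimate. You propose to read the right-endpoint ordinate off ``the top-weight diagonal contribution'' or from ``an eigenvector of maximal slope attached to the unique top-weight monomial''; but $H_\Delta(nD)$ need not equal $1$ (for the standard simplex it vanishes), and a lower bound on valuations of matrix entries can never produce the exact value of $\ord_q$ of the leading coefficient. The standard argument --- the one this paper itself reuses in the proof of Theorem \ref{thm35} --- is Poincar\'e duality: the reciprocal roots of $\LF^*(f,T)^{(-1)^{n-1}}$ pair with those of the conjugate L-function so that each product has $\ord_q$ exactly $n$, which pins the common endpoint at height $\frac{n}{2}\cdot n!\Vol(\Delta)=\frac{1}{D}\sum_k kH_\Delta(k)$. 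Without importing that duality (or, as you concede, ``invoking Adolphson--Sperber's exact computation'', which is circular in a proof of their theorem), the second sentence of the statement remains unproved.
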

\begin{Def}[ordinary]
	A Laurent polynomial $f$ is called ordinary if NP($f$) = HP($\Delta$). 
\end{Def}

It is clear that the ordinary property of a Laurent polynomial depends on its Newton polyhedron $\Delta$ and on the 
coefficients of $f(x)$. 
Applying the facial decomposition theorem \cite{Wan1993}, we reduce the ordinary property of $f$ to its smaller pieces which are easier to deal with. 
\begin{Th}[Facial decomposition theorem\cite{Wan1993}]\label{thm9}
	Let $f$ be a non-degenerate Laurent polynomial over $\F_q$. Assume $\Delta=\Delta(f)$ is $n$-dimensional and $\delta_1,\ldots, \delta_h$ are all the co-dimension 1 faces of $\Delta$ which don't contain the origin. Let $f^{\delta_i}$ denote the restriction of $f$ to $\delta_i$. Then $f$ is ordinary if and only if $f^{\delta_i}$ is ordinary for $1\leq i\leq h$. 
\end{Th}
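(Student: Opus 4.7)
The plan is to reduce the statement to an explicit analysis of Dwork's $p$-adic cohomology. By the Adolphson--Sperber construction (cf.\ Theorem \ref{thas}), when $f$ is non-degenerate with $\Delta=\Delta(f)$, the reciprocal zeros of $\LF^{*}(f,T)^{(-1)^{n-1}}$ are the eigenvalues of a Frobenius-type operator $\alpha$ acting on a $p$-adic Banach space that admits a basis indexed by the lattice points $u\in\Z^{n}$ with weight $w(u)<\infty$, graded by $w(u)$. The Hodge polygon $\mathrm{HP}(\Delta)$ is precisely read off from the graded dimensions $H_{\Delta}(k)$ of this basis, while $\mathrm{NP}(f)$ is the Newton polygon of $\det(I-T\alpha)$. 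Ordinarity therefore means that $\alpha$, after passing to the associated graded, acts on each weight piece with the ``generic'' slope predicted by Hodge.

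The key geometric input is the \emph{cone decomposition}. For each codimension-$1$ face $\delta_{i}$ of $\Delta$ that misses the origin, let $C(\delta_{i})$ be the closed cone over $\delta_{i}$ from $0$. Then
\begin{equation*}
    \{u\in\Z^{n}:0<w(u)<\infty\}=\bigsqcup_{i=1}^{h}\bigl(C(\delta_{i})\cap\Z^{n}\setminus\{0\}\bigr),
\end{equation*}
up to the lower-dimensional cones along shared faces (which can be distributed by a fixed rule). Because the weight function is linear on each $C(\delta_{i})$ with denominator $D(\delta_{i})$, one checks that the Hodge numbers split additively: the multiset of slopes of $\mathrm{HP}(\Delta)$ is the disjoint union of the slopes of $\mathrm{HP}(C(\delta_{i}))$, and the latter is precisely the Hodge polygon attached to the Laurent polynomial $f^{\delta_{i}}$ viewed on its cone.

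Next, I would transport this decomposition to the Frobenius side. Using the natural filtration on the Dwork complex by the cones $C(\delta_{i})$ (ordered, say, by some compatible total order on faces), the operator $\alpha$ is block-upper-triangular, with diagonal blocks $\alpha_{i}$ given, modulo lower-weight terms, by the Frobenius attached to $f^{\delta_{i}}$. Hence
\begin{equation*}
    \det(I-T\alpha)=\prod_{i=1}^{h}\det(I-T\alpha_{i})\quad\text{up to a unit},
\end{equation*}
which yields
\begin{equation*}
    \mathrm{NP}(f)=\bigsqcup_{i=1}^{h}\mathrm{NP}(f^{\delta_{i}}),
\end{equation*}
again as multisets of slopes. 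Combined with the Adolphson--Sperber lower bound applied face-by-face ($\mathrm{NP}(f^{\delta_{i}})\geq\mathrm{HP}(C(\delta_{i}))$), equality holds globally precisely when it holds on every face.

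The main obstacle is justifying the block-triangularity: one must show that the off-diagonal contributions, coming from monomial products crossing between cones, strictly raise the $p$-adic valuation (i.e.\ have positive ``defect'' with respect to the weight filtration), so that they do not affect the associated graded and hence do not affect the Newton polygon of the characteristic series. This is the point where non-degeneracy of $f$ is essentially used, through the explicit estimates on Dwork's splitting function coefficients along each cone; once this is in place, the equivalence claimed in the theorem follows by comparing $\mathrm{NP}$ and $\mathrm{HP}$ piece by piece.
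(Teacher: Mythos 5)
The paper does not actually prove this statement---it is quoted from \cite{Wan1993}---so your proposal has to stand on its own. It identifies the right objects (the decomposition of $C(\Delta)$ into the cones $C(\delta_i)$, the weight filtration on Dwork's Frobenius matrix), but both of its load-bearing claims fail precisely on the shared walls $C(\delta_i)\cap C(\delta_j)$, which you dismiss as distributable ``by a fixed rule.'' The slopes of $\mathrm{HP}(\Delta)$ are \emph{not} the disjoint union of the slopes of the $\mathrm{HP}(\Delta_i)$ (with $\Delta_i$ the convex hull of $\delta_i$ and the origin): lattice points on a wall are counted once on the left but once per incident cone on the right, and the correct relation is the inclusion--exclusion used in (\ref{eqw}). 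The paper's own computation refutes the disjoint-union claim: in Theorem \ref{th6} each $\mathrm{HP}(\Delta_i)$ has slope multiset $\{0,1,\dots,n\}$, so the disjoint union is $\{0,0,1,1,\dots,n,n\}$ with total height $n(n+1)$, whereas $\mathrm{HP}(\Delta)$ is $\{0,1,1,\dots,n,n,n+1\}$ with total height $(n+1)^2$. Since $\mathrm{NP}$ and $\mathrm{HP}$ share endpoints (Theorem \ref{thas}), the same count shows your asserted identity $\mathrm{NP}(f)=\bigsqcup_i\mathrm{NP}(f^{\delta_i})$ is also false in general; the theorem asserts only the equivalence of ordinarity, not a decomposition of Newton polygons.

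The deeper gap is the block-triangularity. Writing $w=\max_j\ell_j$ with $\ell_j$ the linear form equal to $1$ on $\delta_j$, one finds that $\ord_p a_{r,s}=w(s)$ (no strict increase) exactly when $r$ and $ps-r$ both lie in \emph{some} cone $C(\delta_j)$ containing $s$. For $s$ interior to a single $C(\delta_j)$ this indeed forces $r\in C(\delta_j)$, but for $s$ on a wall the index $r$ may sit in either adjacent cone without raising the valuation, so no assignment of wall points to blocks makes the off-diagonal blocks negligible. (Contrast this with the boundary decomposition of Subsection \ref{subsubsec233}, where triangularity is exact because $F_{ps-r}=0$ off the cone $C(\Delta)$; you appear to be transporting that mechanism to a decomposition where it does not operate.) Hence the Fredholm determinant does not factor, even modulo higher valuation, and the theorem cannot be obtained by comparing $\mathrm{NP}$ and $\mathrm{HP}$ piece by piece; the actual proof must analyze which permutations in the expansion of the relevant principal minors attain the minimal valuation and control the wall-crossing contributions, which is where the real work lies. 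A smaller inaccuracy: non-degeneracy is not what controls these off-diagonal valuations (the estimates on $F_r(f)$ hold for arbitrary $f$); it is needed to make $\LF^*(f,T)^{(-1)^{n-1}}$ a polynomial of degree $n!\Vol(\Delta)$ so that the chain-level comparison of Proposition \ref{prop c} descends to $\mathrm{NP}(f)$.
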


\subsection{Boundary decomposition theorems}\label{subsec23}
Before describing the boundary decomposition, we express the L-function in terms of the Fredholm determinant of an infinite Frobenius matrix via Dwork's trace formula.
\subsubsection{Dwork's trace formula}\label{subsubsec232}
Let $\Q_p$ be the field of $p$-adic numbers and $\Omega$ be the completion of $\overline{\Q}_p$. A fixed primitive $p$-th root of unity in $\Omega$ is denoted as $\zeta_p$. Let $\pi$ be the element of $\Q_p(\zeta_p)$  satisfies
\begin{equation*}
	\sum^{\infty}_{m=0}\frac{\pi^{p^m}}{p^m}=0, \ \pi \equiv \zeta_p-1 \mod (\zeta_p-1)^2,  \quad \text{and} \quad \ord_p \pi =\frac{1}{p-1}.
\end{equation*} 
Then, $\pi$ is a uniformizer of $\Q_p(\pi)$ and thus $\Q_p(\pi)=\Q_p(\zeta_p)$. 
Let $E_p(t)$ be the Artin-Hasse exponential series,
\begin{equation*}
	E_p(t)=\exp\left(\sum^{\infty}_{m=0}\frac{t^{p^m}}{p^m}\right)=\sum_{m=0}^{\infty}\lambda_m t^m \in  \Z_p[[x]].
\end{equation*}
In Dwork's terminology, a splitting function $\theta(t)$ is defined to be
\begin{equation*}
	\theta(t)=E_p(\pi t)=\sum_{m=0}^{\infty}\lambda_m\pi^mt^m.
\end{equation*}
A Laurent polynomial $f \in $ $\F_q[x_1^{\pm1}, \ldots, x_n^{\pm1}]$ is written as
\begin{equation*}
	f=\sum_{j=1}^J \bar{a}_j x^{V_j},
\end{equation*}
where $V_j \in {\Z}^n$ and $\bar{a}_j \in \F_q^{*}$. Let $a_j$ be the Teichm\"{u}ller lifting of $\bar{a}_j $ in $\Omega$ satisfying $a_j^q=a_j$. Let
\begin{equation*}
	F(f,x)=\prod_{j=1}^J\theta(a_j x^{V_j})=\sum_{r \in {\Z}^n} F_r(f)x^r.
\end{equation*}
The coefficients are given by 
\begin{equation*}
	F_r(f)=\sum_u (\prod^{J}_{j=1} \lambda_{u_j} a_j^{u_j}) \pi^{u_1+\dots+u_{J}}, \quad r \in {\Z}^n,
\end{equation*}
where the sum is over all the solutions of the following linear system
\begin{equation*}
	\sum^{J}_{j=1}u_jV_j=r \quad \text{with}\quad u_j \in \Z_{\geq 0},
\end{equation*} 
and $\lambda_m$ is $m$-th coefficient of the Artin-Hasse exponential series $E_p(t)$.

Assume $\Delta=\Delta(f)$. Let $L(\Delta)=\Z^{n}\cap C(\Delta)$ be the set of lattice points in the closed cone generated by origin and $\Delta$. For a given point $r\in \R^n$, define the weight function to be
\begin{equation*}
	w(r): =\inf_{\vec{u}}\left\{ \sum_{j=1}^J u_j |\sum_{j=1}^J u_jV_j=r,\quad u_j\in \R_{\geq 0}\right\}.
\end{equation*}
The infinite semilinear Frobenius matrix $A_1(f)$ is the following matrix whose rows and columns are indexed by the lattice points in $L(\Delta)$ with respect to the weights: 
\begin{equation*}
	A_1(f)=(a_{r,s}(f))=(F_{ps-r}(f)\pi^{w(r)-w(s)}),
\end{equation*}
where $r, s\in L(\Delta)$. 
The infinite linear Frobenius matrix $A_a(f)$ is defined to be
\begin{equation*}
	A_a(f)=A_1(f)A_1^{\tau}(f)\cdots A_1^{\tau^{a-1}}(f), 
\end{equation*}
where $\tau$ is the absolute Frobenius automorphism. 

Dwork's trace formula can be expressed in terms of the matrix $A_a(f)$ as follows, see \cite{Wan2004}

\begin{Th}\label{th211}
	We have
	\begin{equation}\label{eq24}
		\LF^{*}(f,T)^{(-1)^{n-1}}=\prod_{i=0}^{n}\det(I-Tq^{i}A_a(f))^{(-1)^i\binom{n}{i}}.
	\end{equation}
	Equivalently,
	\begin{equation}\label{eq25}
		\det(I-TA_a(f))=\prod_{i=0}^{\infty} \left( \LF^{*}(f,q^iT)^{(-1)^{n-1}} \right)^{\binom{n+i-1}{i}}.
	\end{equation}
\end{Th}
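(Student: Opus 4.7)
The plan is to derive both forms of the trace formula from the standard completely continuous operator machinery of Dwork, together with an inclusion--exclusion that converts affine lattice points into toric ones. The proof will have three steps: (i) lift the additive character via the splitting function to express $S^*_k(f)$ in terms of Teichm\"uller values of $F(f,x)$; (ii) realize the iterated Frobenius $A_a(f)$ as the matrix of a nuclear operator on a weighted $p$-adic Banach space so that Dwork's trace formula for operators applies; (iii) pass between the affine trace identity and the toric L-function by an inclusion--exclusion on coordinate hyperplanes that produces the binomial coefficients $\binom{n}{i}$, and then invert formally to get the equivalence between \eqref{eq24} and \eqref{eq25}.

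For step (i), I would use the identity $\psi\bigl(\Tr_{\F_{q^k}/\F_p}(\bar c)\bigr)=\prod_{i=0}^{ak-1}\theta(c^{p^i})$ for the Teichm\"uller lift $c$ of $\bar c$, where $q=p^a$. Substituting into the definition of $S^*_k(f)$ and using multiplicativity of $\theta$ on the monomials $a_j x^{V_j}$ rewrites the sum as
\begin{equation*}
S^*_k(f)=\sum_{x\in(\mu_{q^k-1})^n}\prod_{i=0}^{ak-1} F\!\left(f^{\sigma^i},x^{p^i}\right),
\end{equation*}
where $\sigma$ is the Frobenius lifting on Witt vectors of $\F_q$. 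For step (ii), I would introduce the Banach space $B(\Delta)$ of formal series $\sum_{r\in L(\Delta)} c_r\,\pi^{w(r)}x^r$ with $c_r\to 0$, and the operator $\alpha_1=\psi_p\circ F(f,x)\circ\sigma^{-1}$, where $\psi_p$ is Dwork's contraction $x^r\mapsto x^{r/p}$ (zero if $p\nmid r$). A direct computation shows that the matrix of $\alpha_1$ in the natural monomial basis is $A_1(f)$ as defined, and hence the matrix of $\alpha_a:=\alpha_1\alpha_1^\sigma\cdots\alpha_1^{\sigma^{a-1}}$ is $A_a(f)$. Complete continuity follows from the standard estimate $\ord_p F_r(f)\geq w(r)/(p-1)$, ensuring that traces and Fredholm determinants make sense.

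The heart of the argument is step (iii). Monsky's trace formula for completely continuous operators gives, for each $k\geq 1$,
\begin{equation*}
(q^k-1)^n\,\Tr\!\bigl(A_a(f)^k\bigr)=\sum_{x\in(\mu_{q^k-1})^n}\prod_{i=0}^{ak-1}F\!\left(f^{\sigma^i},x^{p^i}\right)=S^*_k(f),
\end{equation*}
after carrying out the inclusion--exclusion on the coordinate hyperplanes $x_j=0$ that distinguishes toric lattice points from those of the full cone. Expanding $(q^k-1)^n=\sum_{i=0}^n(-1)^{n-i}\binom{n}{i}q^{ik}$ on the left and rearranging yields the equivalent identity
\begin{equation*}
\Tr\!\bigl(A_a(f)^k\bigr)=\sum_{i=0}^{\infty}\binom{n+i-1}{i}q^{ik}\,S^*_k(f),
\end{equation*}
using $(1-q^kT^0)^{-n}$ style expansion of $(q^k-1)^{-n}$ in the appropriate sense. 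Taking $-\sum_k T^k/k$ of both sides and exponentiating, together with $\det(I-T\alpha)=\exp\bigl(-\sum_k\Tr(\alpha^k)T^k/k\bigr)$ and $\log\LF^*(f,T)^{(-1)^{n-1}}=\sum_k S^*_k(f)T^k/k$, converts these identities into the two product formulas \eqref{eq24} and \eqref{eq25}. The equivalence of \eqref{eq24} and \eqref{eq25} is then a formal M\"obius inversion based on the pair of binomial series $(1-qT)^n=\sum_i(-1)^i\binom{n}{i}q^iT^i$ and $(1-qT)^{-n}=\sum_i\binom{n+i-1}{i}q^iT^i$.

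The main obstacle is step (iii): one must justify the passage from the affine Monsky trace identity (which counts all lattice points in $L(\Delta)$) to the toric sum over $(\F_{q^k}^*)^n$, and verify that the inclusion--exclusion on coordinate hyperplanes is compatible with the Banach space setup (i.e., that the restriction to each face is itself a nuclear operator and that the binomial coefficients emerge from the combinatorics of the $2^n$ faces of the coordinate cross). Once this compatibility is in place, both displayed formulas follow by formal manipulation of generating functions.
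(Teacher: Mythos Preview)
The paper does not prove this theorem; it states it with the reference ``see \cite{Wan2004}'' and moves on. Your outline is the standard argument recorded there (Dwork's $p$-adic operator machinery plus formal manipulation of the generating series), so there is nothing substantive to compare against.

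That said, two points in your sketch deserve correction. First, your attribution of the factor $(q^k-1)^n$ to ``inclusion--exclusion on the coordinate hyperplanes $x_j=0$ that distinguishes toric lattice points from those of the full cone'' is a misdiagnosis. In the toric setting the Banach space is indexed by $L(\Delta)\subset\Z^n$ with no positivity constraint, the variables $x_j$ never take the value $0$, and the factor arises \emph{directly} from the elementary identity $(q^k-1)\,\Tr(\psi_{q^k}\circ g)=\sum_{t^{q^k-1}=1}g(t)$ on the one-variable monomial space $\{x^r:r\in\Z\}$, tensored $n$ times. Inclusion--exclusion on coordinate hyperplanes is what relates \emph{affine} sums over $\F_{q^k}^n$ to toric sums over $(\F_{q^k}^*)^n$, which is a separate matter and not needed here. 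So the ``main obstacle'' you flag is not actually present.

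Second, there are sign slips that compound rather than cancel: the correct inversion is
\[
\Tr\bigl(A_a(f)^k\bigr)=(-1)^n\sum_{i\ge 0}\binom{n+i-1}{i}q^{ik}S^*_k(f),
\]
since $(q^k-1)^{-n}=(-1)^n(1-q^k)^{-n}$ in the $p$-adic expansion, and $\log\LF^*(f,T)^{(-1)^{n-1}}=(-1)^{n-1}\sum_kS^*_k(f)T^k/k$, not $\sum_kS^*_k(f)T^k/k$. With your stated versions the product in \eqref{eq25} comes out inverted. These are bookkeeping issues, not structural ones.
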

Now it suffices to understand the determinant $\det(I-TA_a(f))$.
Based on the fact that $\ord_p F_r(f)\geq \frac{w(r)}{p-1}$, we have the following estimate
\begin{equation*}
	\ord_p( a_{r,s}(f)) \geq \frac{w(ps-r)+w(r)-w(s)}{p-1}\geq w(s).
\end{equation*} 
Let $\xi$ be an element in $\Omega$ satisfying $\xi^D=\pi^{p-1}$. Then $A_1(f)$ can be written in a block form,
\begin{equation*}
	A_1(f)
	=\begin{pmatrix}
		A_{00} &  \xi A_{01}  & \cdots\quad & {\xi}^iA_{0i}&\cdots\\
		A_{10} &  \xi A_{11}  & \cdots\quad & {\xi}^iA_{1i}&\cdots\\
		\vdots & \vdots & \ddots  & \vdots  \\
		A_{i0} &  \xi A_{i1}  & \cdots\quad & {\xi}^iA_{ii}&\cdots\\
		\vdots & \vdots & \ddots  & \vdots
	\end{pmatrix},
\end{equation*}
where the block $A_{ii}$ is a $p$-adic integral $W_{\Delta}(i) \times W_{\Delta}(i)$ matrix.
This implies that the $q$-adic Newton polygon of $\det(I-TA_1(f))$ has a natural lower bound which can be identified with the chain level version of the Hodge polygon.

\begin{Def}
	Let $P(\Delta)$ be the polygon in $\R^2$ with vertices $(0,0)$ and 
	\begin{equation*}
		P_k=\left( \sum^k_{m=0}W_{\Delta}(m), \frac{1}{D}\sum^k_{m=0}m W_{\Delta}(m) \right), \quad  k=0,1,2, \ldots
	\end{equation*}
\end{Def}

The chain level version of Adolphson-Sperber's lower bound and the ordinary property are as follows.

\begin{Prop}[\cite{AS1987}] 
	The $q$-adic Newton polygon of $\det(I-TA_a(f))$ lies above $P(\Delta).$
\end{Prop}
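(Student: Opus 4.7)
The plan is to deduce the Newton polygon bound from three ingredients: a pointwise $p$-adic estimate on the entries of $A_1(f)$, the product structure $A_a(f) = A_1(f) A_1^\tau(f) \cdots A_1^{\tau^{a-1}}(f)$, and the Leibniz expansion of the Fredholm determinant.

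First I would establish the basic estimate $\ord_p(a_{r,s}(f)) \geq w(s)$ for all $r, s \in L(\Delta)$. The two inputs are the Artin--Hasse bound $\ord_p F_r(f) \geq w(r)/(p-1)$, which follows from the integrality $\lambda_m \in \Z_p$ of the Artin--Hasse coefficients together with the definition of the weight function (any representation $\sum u_j V_j = r$ with $u_j \in \Z_{\geq 0}$ satisfies $\sum u_j \geq w(r)$), and the sub-additivity $w(ps - r) + w(r) \geq p w(s)$, which is immediate from positive homogeneity $w(ps) = p w(s)$ and sub-additivity of $w$ on $C(\Delta)$. Combined with $\ord_p \pi = 1/(p-1)$, these give
\begin{equation*}
\ord_p(a_{r,s}(f)) \;\geq\; \frac{w(ps-r) + w(r) - w(s)}{p-1} \;\geq\; \frac{(p-1)w(s)}{p-1} \;=\; w(s).
\end{equation*}
This is exactly the block form $A_1(f) = (\xi^j A_{ij})$ recalled in the excerpt, with $A_{ij}$ having $p$-adic integral entries.

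Next I would propagate the estimate to $A_a(f)$ in the stronger form $\ord_q((A_a(f))_{r,s}) \geq w(s)$. The cleanest route is a diagonal change of basis by $D = \mathrm{diag}\bigl(\xi^{D w(s)}\bigr)_{s}$, which is $\tau$-stable under the usual convention that $\tau$ fixes $\pi$ and $\xi$. Conjugating by $D$ turns each factor $A_1^{\tau^i}(f)$ into a $p$-adic integral matrix, so that $D^{-1} A_a(f) D$ is a product of $a$ integral matrices and is itself integral; undoing the conjugation yields $\ord_p((A_a(f))_{r,s}) \geq a \cdot w(s)$, equivalently $\ord_q((A_a(f))_{r,s}) \geq w(s)$. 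With this in hand, expand
\begin{equation*}
\det(I - T A_a(f)) = \sum_{k \geq 0} (-T)^k c_k(f),
\end{equation*}
where $c_k(f)$ is the sum of the $k \times k$ principal minors of $A_a(f)$. Any principal minor indexed by $S \subset L(\Delta)$ of size $k$ is a signed sum of products $\prod_{s \in S}(A_a(f))_{s, \sigma(s)}$ over permutations $\sigma$ of $S$, each of $q$-adic valuation at least $\sum_{s \in S} w(s)$. Minimizing over all $k$-subsets picks out the $k$ lattice points of smallest weight, and this minimum equals the $y$-coordinate of $P(\Delta)$ at abscissa $k$ by the definitions of $W_\Delta(m)$ and of $P_k$. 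Therefore the $q$-adic Newton polygon of $\det(I - T A_a(f))$ lies above $P(\Delta)$.

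The main obstacle is the $q$-adic bookkeeping in the passage from $A_1$ to $A_a$: a naive path-expansion along $r = r_0 \to r_1 \to \cdots \to r_a = s$ only yields $\ord_p((A_a(f))_{r,s}) \geq \sum_{i=1}^a w(r_i) \geq w(s)$, which translates to $\ord_q \geq w(s)/a$ and falls short by a factor $a$. The Frobenius-stable diagonal conjugation above is the device that correctly accumulates the missing factor $a$; verifying integrality of the conjugated matrix, and in particular checking $D^\tau = D$ under the standard extension of $\tau$ to $\Q_p(\zeta_p)$, is where the real technical care is needed.
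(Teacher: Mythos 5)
Your reduction for $a=1$ is fine: the entrywise bound $\ord_p(a_{r,s}(f))\geq w(s)$ follows exactly as you say, and the Leibniz/minor expansion of $\det(I-TA_1(f))$ then gives the chain-level polygon $P(\Delta)$ because every permutation product over a $k$-subset $S$ has valuation at least $\sum_{s\in S}w(s)$. The gap is in the passage to $A_a(f)$. The intermediate claim you want, $\ord_q\big((A_a(f))_{r,s}\big)\geq w(s)$, i.e.\ $\ord_p\big((A_a(f))_{r,s}\big)\geq a\,w(s)$, is simply false. In the path expansion $(A_a(f))_{r,s}=\sum a_{r,r_1}a^{\tau}_{r_1,r_2}\cdots a^{\tau^{a-1}}_{r_{a-1},s}$, the path $r\to 0\to\cdots\to 0\to s$ through the origin contributes a term of valuation only about $w(0)+\cdots+w(0)+w(s)=w(s)$, and nothing forces cancellation; so generically $\ord_p\big((A_a(f))_{r,s}\big)$ is near $w(s)$, not $a\,w(s)$. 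Your diagonal conjugation cannot repair this: conjugation by $E=\mathrm{diag}(\xi^{Dw(s)})$ telescopes in the product, $E^{-1}A_a E=\prod_i E^{-1}A_1^{\tau^i}E$, so even granting integrality of each conjugated factor you would only recover a bound of the shape $\ord_p\big((A_a)_{r,s}\big)\geq w(r)-w(s)$, which depends on the row index and is of the wrong form entirely. (Moreover $E^{-1}A_1E$ is not integral: its $(r,s)$ entry has $\ord_p\geq 2w(s)-w(r)$, which can be negative.)

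The correct argument must be made at the level of the characteristic series, not entrywise. The standard device (Adolphson--Sperber; see also Wan's treatment) is to form the $a\times a$ block cyclic matrix $B$ whose nonzero blocks are $A_1(f),A_1^{\tau}(f),\ldots,A_1^{\tau^{a-1}}(f)$ placed on the superdiagonal and lower-left corner. Then $B^a=\mathrm{diag}(C_0,\ldots,C_{a-1})$ with each $C_i$ a cyclic rotation of the product defining $A_a(f)$; since $XY$ and $YX$ have the same Fredholm determinant, $\det(I-T^aB^a)=\det(I-T^aA_a(f))^a$, while also $\det(I-T^aB^a)=\prod_{\zeta^a=1}\det(I-\zeta TB)$. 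Every column of $B$ indexed by a lattice point $s$ does satisfy the entrywise bound $\ord_p\geq w(s)$, so your minor-expansion argument applies verbatim to $\det(I-TB)$ and shows its $\ord_p$-Newton polygon lies above the polygon in which each weight $w(s)$ occurs $a$ times. Unwinding the two identities converts this into the statement that the $\ord_q$-Newton polygon of $\det(I-TA_a(f))$ lies above $P(\Delta)$; this bookkeeping is exactly where the missing factor of $a$ is legitimately recovered. You correctly diagnosed the difficulty, but the proposed fix does not close it.
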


\begin{Prop}[\cite{Wan2004}]\label{prop c}
	Notations as above. Assume $f$ is non-degenerate with $\Delta=\Delta(f)$. Then $\mathrm{NP}(f)=\mathrm{HP}(\Delta)$ if and only if the $q$-adic Newton polygon of $\det(I-TA_a(f))$ coincides with its lower bound $P(\Delta).$
\end{Prop}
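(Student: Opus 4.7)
The plan is to deduce the equivalence directly from the two forms of Dwork's trace formula, \eqref{eq24} and \eqref{eq25}, by translating slopes on both sides into formal generating functions and invoking the combinatorial identity that already connects $W_\Delta(k)$ with $H_\Delta(k)$.

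First I record the elementary observation that if $g(T)\in 1+T\overline{\Q}_p[[T]]$ has reciprocal root multiset $\{\alpha_j\}$, then $g(q^iT)$ has reciprocal roots $\{q^i\alpha_j\}$, so every slope is shifted by $i$. Encoding the slope multiset of $g$ by the formal generating function $\sigma_g(t)=\sum_j t^{\mathrm{ord}_q(\alpha_j)}$, this shift is simply multiplication by $t^i$; one also has $\sigma_{g_1g_2}=\sigma_{g_1}+\sigma_{g_2}$ and $\sigma_{1/g}=-\sigma_g$. Combined with the standard identity $\sum_{i\geq 0}\binom{n+i-1}{i}t^i=(1-t)^{-n}$ and the definitional relation $\sum_kW_\Delta(k)t^{k/D}=(1-t)^{-n}\sum_kH_\Delta(k)t^{k/D}$ from Section \ref{subsec22}, this supplies the entire combinatorial bridge between the two sides of the equivalence.

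For the forward direction, I would assume $\mathrm{NP}(f)=\mathrm{HP}(\Delta)$, so that the slope generating function of $L^*(f,T)^{(-1)^{n-1}}$ is $\sum_kH_\Delta(k)t^{k/D}$, and substitute into \eqref{eq25}:
\[\sigma_{\det(I-TA_a(f))}(t)=\sum_{i\geq 0}\binom{n+i-1}{i}t^i\cdot\sum_kH_\Delta(k)t^{k/D}=\sum_kW_\Delta(k)t^{k/D},\]
which is exactly the slope data defining $P(\Delta)$. Conversely, assuming $\det(I-TA_a(f))$ attains its lower bound $P(\Delta)$, I substitute $\sum_kW_\Delta(k)t^{k/D}$ into \eqref{eq24} to obtain
\[\sigma_{L^*(f,T)^{(-1)^{n-1}}}(t)=(1-t)^n\sum_kW_\Delta(k)t^{k/D}=\sum_kH_\Delta(k)t^{k/D},\]
equal to the slope data of $\mathrm{HP}(\Delta)$, so $\mathrm{NP}(f)=\mathrm{HP}(\Delta)$.

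The main technical obstacle I foresee is rigorously justifying the slope-counting manipulation inside the \emph{infinite} product \eqref{eq25}, since $\det(I-TA_a(f))$ is a $p$-adic entire function with infinitely many reciprocal roots. The key point is that all slopes of $L^*(f,q^iT)^{(-1)^{n-1}}$ lie in $[i,i+n]$, so for any fixed threshold $M$ only the finitely many factors with $i\leq M$ contribute slopes $\leq M$; this lets one truncate at any finite slope and compare coefficients unambiguously. A related subtlety is that the alternating signs in \eqref{eq24} represent genuine factor cancellation, but because \eqref{eq24} is an identity of $p$-adic analytic functions and the Hodge numbers satisfy $H_\Delta(k)\geq 0$, the signed slope multiset on the right simplifies to a true positive multiset agreeing with the left.
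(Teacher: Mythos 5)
The paper does not prove this proposition; it is quoted from \cite{Wan2004}, so there is no in-text argument to compare against. Your proof is correct and is in fact the standard argument behind the cited result: transfer the slope data between $\det(I-TA_a(f))$ and $\LF^*(f,T)^{(-1)^{n-1}}$ through the two forms of Dwork's trace formula, and observe that multiplication of slope generating functions by $(1-t)^{\pm n}$ is exactly the combinatorial relation defining $H_\Delta$ in terms of $W_\Delta$. Your handling of the infinite product in \eqref{eq25} by truncating at a finite slope threshold is the right way to make the slope bookkeeping rigorous, since every factor $\LF^*(f,q^iT)^{(-1)^{n-1}}$ contributes only slopes in $[i,i+n]$.

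One small point of logic in your last paragraph: the reason the signed slope computation coming from \eqref{eq24} yields a genuine (non-negative) multiset is not that $H_\Delta(k)\geq 0$, but that under the non-degeneracy hypothesis $\LF^*(f,T)^{(-1)^{n-1}}$ is already known to be a polynomial (Theorem \ref{th2}), so its slope multiset is a true multiset a priori; the formal identity then forces it to equal $\sum_k H_\Delta(k)t^{k/D}$, and the non-negativity of the $H_\Delta(k)$ is consistent with (rather than the justification for) this. Equivalently, in the converse direction you may avoid \eqref{eq24} altogether and simply invert \eqref{eq25}: the map $\sigma\mapsto(1-t)^{-n}\sigma$ is injective on formal slope series, so the slope data of $\det(I-TA_a(f))$ determines that of the L-function uniquely. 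With that clarification your argument is complete.
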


\subsubsection{Boundary decomposition}\label{subsubsec233}
Let $f \in $ $\F_q[x_1^{\pm1}, \ldots, x_n^{\pm1}]$ with $\Delta=\Delta(f)$, where $\Delta$ is an $n$-dimensional integral convex polyhedron in $\R^n$ containing the origin. Let $C(\Delta)$ be the cone generated by $\Delta$ in $\R^n.$ 
\begin{Def}\label{defbd}
	The boundary decomposition 
	\begin{equation*}
		B(\Delta)=\{ \text{ the interior of a closed face in }C(\Delta) \text{ containing the origin} \}
	\end{equation*}
	is the unique interior decomposition of $C(\Delta)$ into a disjoint union of relatively open cones. 
\end{Def}
If the origin is a vertex of $\Delta$, then it is the unique 0-dimensional open cone in $B(\Delta)$. Recall that $A_1(f)=(a_{r,s}(f))$ is the infinite semilinear Frobenius matrix whose rows and columns are indexed by the lattice points in $L(\Delta)$. For $\Sigma \in B(\Delta)$, we define $A_1(\Sigma,f)$ to be the submatrix of $A_1(f)$ with $r,s \in \Sigma$. Let $f^{\overline{\Sigma}}$ be the restriction of $f$ to the closure of $\Sigma$. Then $A_1(\Sigma,f^{\overline{\Sigma}})$ denotes the submatrix of $A_1(f^{\overline{\Sigma}})$ with $r,s \in \Sigma$.

Let $B(\Delta)=\{\Sigma_0, \ldots, \Sigma_h\}$ such that $\text{dim}(\Sigma_i)\leq \text{dim}(\Sigma_{i+1})$, $i=0,\ldots,h-1.$ Define $B_{ij}=(a_{r,s}(f))$ with $ r\in \Sigma_i$ and $ s\in \Sigma_j$ $(0\leq i,j \leq h)$. After a permutation of basis vectors, the infinite semilinear Frobenius matrix can be written as
\begin{equation}
	A_1(f)=
	\begin{pmatrix}
		B_{00} &  B_{01}  & \cdots\quad &B_{0h}\\
		B_{10} &  B_{11}  & \cdots\quad & B_{1h}\\
		\vdots & \vdots & \ddots  & \vdots  \\
		B_{h0} & B_{h1}  & \cdots\quad & B_{hh}
	\end{pmatrix},
\end{equation} 
where $B_{ij}=0$ for $i>j$. Then $\det(I-TA_1(f))=\prod_{i=0}^h\det(I-TB_{ii})$ and we have the boundary decomposition theorem.
\begin{Th}[Boundary decomposition\cite{Wan1993}] \label{th215}
	Let $f\in \F_q[x_1^{\pm1}, \ldots, x_n^{\pm1}]$ with $\Delta=\Delta(f)$. Then we have the following factorization
	\begin{equation*}
		\det(I-TA_1(f))=\prod_{\Sigma \in B(\Delta)}\det \left(I-TA_1(\Sigma,f^{\overline{\Sigma}})\right).
	\end{equation*}
\end{Th}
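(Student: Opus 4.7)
The plan is to exploit the block-triangular structure of the infinite semilinear Frobenius matrix $A_1(f)$ that is already built into the ordering of the open cones $\Sigma_0,\ldots,\Sigma_h$ by dimension. Once we verify that the off-diagonal blocks below the diagonal vanish and identify each diagonal block with the Frobenius matrix of the corresponding facial restriction, the factorization of the Fredholm determinant is immediate from $\det(I-TA_1(f))=\prod_{i=0}^h\det(I-TB_{ii})$ in the display just before the theorem.

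First I would prove the crucial convex-geometric fact: if $\Sigma_j\in B(\Delta)$ is the relative interior of a closed face $F_j$ of the cone $C(\Delta)$ and $ps$ lies in $\Sigma_j$ (which it does for $s\in\Sigma_j$, since open faces are stable under positive scaling), then any decomposition $ps=r+\sum_{j=1}^J u_j V_j$ with $r\in C(\Delta)$ and $u_j\in\R_{\geq 0}$ forces $r\in F_j$ and $V_j\in F_j$ whenever $u_j\neq 0$. This follows from the standard fact that a point in the relative interior of a face of a convex cone can only be written as a non-negative combination of points that themselves lie in that face. Applied to $F_{ps-r}(f)=\sum_u(\prod\lambda_{u_j}a_j^{u_j})\pi^{\sum u_j}$, this simultaneously gives the two things I need.

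Second, the vanishing of the lower-triangular blocks: if $r\in\Sigma_i$ contributes a non-zero entry $a_{r,s}(f)=F_{ps-r}(f)\pi^{w(r)-w(s)}$ with $s\in\Sigma_j$, then by the step above $r\in F_j=\overline{\Sigma_j}$, hence $\Sigma_i$ is a face of $\overline{\Sigma_j}$, and in particular $\dim\Sigma_i\leq\dim\Sigma_j$. So when the open cones are listed in non-decreasing order of dimension one has $B_{ij}=0$ for $i>j$, which is exactly the block-triangular structure displayed before the statement.

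Third, the identification of the diagonal blocks: for $r,s\in\Sigma_i$, the same convex argument implies that only monomials $V_j$ lying in $\overline{\Sigma_i}$ can appear with positive coefficient in any decomposition of $ps-r$, so the defining sum for $F_{ps-r}(f)$ is unchanged when $f$ is replaced by its restriction $f^{\overline{\Sigma_i}}$. Hence $B_{ii}=A_1(\Sigma_i,f^{\overline{\Sigma_i}})$, and combining with the triangular factorization yields the claim. The main conceptual point, and the one I would dwell on, is the convex-geometric extremality lemma for relative interiors of faces of $C(\Delta)$; everything else is bookkeeping on the Dwork matrix. A minor subtlety worth checking carefully is that the origin (the unique $0$-dimensional cone in $B(\Delta)$, if present) gives the single diagonal entry indexed by $r=s=0$, which correctly contributes the factor $1-T$ via $F_0(f)=1$ and $w(0)=0$.
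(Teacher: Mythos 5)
Your proposal is correct and follows essentially the same route as the paper: block upper-triangularity of $A_1(f)$ with respect to the dimension-ordered boundary decomposition, plus identification of the diagonal blocks with the restricted Frobenius matrices, with the convex-geometric extremality lemma for relative interiors of faces of $C(\Delta)$ supplying the detail the paper delegates to the citation of Wan (1993). The one step worth phrasing slightly more carefully is that for $i\neq j$ a nonzero entry forces $\overline{\Sigma}_i\subsetneq\overline{\Sigma}_j$ and hence \emph{strict} inequality of dimensions, so that cones of equal dimension (whose relative order in the listing is arbitrary) can never contribute a nonzero block below the diagonal.
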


\subsection{Diagonal local theory}\label{subsec24}
In this subsection, we introduce some non-degenerate and ordinary criteria when the Laurent polynomial is diagonal.
\begin{Def}
	A Laurent polynomial $f \in  \F_{q}[x_1^{\pm1},\ldots, x_n^{\pm1}]$ is called diagonal if $f$ has exactly $n$ non-constant terms and $\Delta(f)$ is an $n$-dimensional simplex in $\R^n.$
\end{Def}
Let $f$ be a diagonal Laurent polynomial over  $\F_{q}$. Write 
\begin{equation*}
	f(x_1,x_2, \ldots x_n)=\sum_{j=1}^n a_j x^{V_j},
\end{equation*}
where $a_j \in \F ^*_{q}$ and $V_j=(v_{1j},\ldots, v_{nj})\in \Z^n$ for $1\leq j\leq n$. Let $\Delta=\Delta(f)$. The vertex matrix of $\Delta$ is defined to be
\begin{equation*}
	M(\Delta)=(V_1,\ldots,V_n),
\end{equation*} 
where the $i$-th column is the $i$-th exponent of $f$. Since $f$ is diagonal, $M(\Delta)$ is invertible.
\begin{Prop}\label{propc1}
	Suppose $f\in \F_{q}[x_1^{\pm1},\ldots, x_n^{\pm1}]$ is diagonal with $\Delta=\Delta(f)$. Then $f$ is non-degenerate if and only if $p$ is relatively prime to $\det(M(\Delta))$. 
\end{Prop}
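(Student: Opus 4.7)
The plan is to recast non-degeneracy as a linear-algebraic condition on $M(\Delta)$ modulo $p$, exploiting the diagonal form of $f$.

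First, I would enumerate the relevant faces. Since $V_j \neq 0$ for all $j$, the polytope $\Delta = \mathrm{conv}(0, V_1, \ldots, V_n)$ is an $n$-simplex whose closed faces avoiding the origin are precisely the sub-simplices $\delta_S = \mathrm{conv}(V_j : j \in S)$ for nonempty $S \subseteq \{1, \ldots, n\}$, with $f^{\delta_S} = \sum_{j \in S} a_j x^{V_j}$. Setting $y_j := a_j x^{V_j}$ and multiplying $\partial f^{\delta_S}/\partial x_i$ by $x_i$, the condition that the partials have a common zero with $x_1 \cdots x_n \neq 0$ becomes the $\F_p$-linear system $M_S y = 0$ on $(y_j)_{j \in S} \in (\overline{\F}_p^*)^S$, where $M_S$ is the $n \times |S|$ submatrix of $M(\Delta)$ with columns $\{V_j : j \in S\}$.

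Next, I would check that every such $y \in (\overline{\F}_p^*)^S$ arises from some $x \in (\overline{\F}_p^*)^n$, so that the translation between the two problems is lossless. Writing the Smith normal form $M(\Delta) = U D V$ with $U, V \in \mathrm{GL}_n(\Z)$ and $D = \mathrm{diag}(d_1, \ldots, d_n)$, the character map $x \mapsto (x^{V_j})_{j=1}^{n}$ factors through the coordinate-wise $d_i$-th power on $(\overline{\F}_p^*)^n$, which is surjective since $\overline{\F}_p^*$ is divisible (the prime-to-$p$ part by existence of roots in the algebraic closure, the $p$-part by the inverse Frobenius). Hence the full character map is surjective, and so is its projection onto the $S$-coordinates.

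The two directions then follow cleanly. If $p \nmid \det M(\Delta)$, then $M(\Delta)$ has $\F_p$-linearly independent columns, so every subcollection $\{V_j : j \in S\}$ is also independent mod $p$, forcing $M_S y = 0 \Rightarrow y = 0$ and hence non-degeneracy on every face. Conversely, if $p \mid \det M(\Delta)$, I would choose a minimal nonempty $S$ for which $\{V_j : j \in S\}$ is $\F_p$-linearly dependent; any nontrivial relation must have all coefficients nonzero by minimality, producing $y \in (\F_p^*)^S \subseteq (\overline{\F}_p^*)^S$ with $M_S y = 0$, which the surjectivity step then lifts to an $x \in (\overline{\F}_p^*)^n$ witnessing degeneracy of $f^{\delta_S}$.

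The main technical point is the surjectivity of the character map, which one must justify carefully when $p$ divides some elementary divisor $d_i$; once that is in hand, the proof reduces to elementary linear algebra combined with the minimal-dependent-subset trick. I do not anticipate other serious obstacles, but I would double-check that the minimality argument indeed forces every coefficient of the dependence relation to be nonzero.
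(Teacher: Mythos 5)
Your proof is correct. The paper states Proposition \ref{propc1} without proof, as part of the diagonal local theory recalled from \cite{Wan2004}, and your argument is precisely the standard one: rewrite the vanishing of the logarithmic partials $x_i\,\partial f^{\delta_S}/\partial x_i$ as the linear system $M_S y=0$ in the monomials $y_j=a_jx^{V_j}$, prove surjectivity of the character map $x\mapsto(x^{V_j})_j$ via Smith normal form together with the divisibility of $\overline{\F}_p^{\,*}$ (prime-to-$p$ part from algebraic closedness, $p$-part from Frobenius), and handle the converse with a minimal $\F_p$-dependent set of columns, whose minimality indeed forces every coefficient of the relation to be nonzero. The only points worth making explicit are that the diagonality hypothesis forces the origin to be a vertex of the simplex $\Delta$ (so the closed faces missing the origin are exactly the $\delta_S$, $\emptyset\neq S\subseteq\{1,\dots,n\}$) and forces $\det M(\Delta)\neq 0$ over $\Z$ (so all elementary divisors $d_i$ are nonzero, which your surjectivity step tacitly uses).
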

Let $S(\Delta)$ be the solution set of the following linear system
\begin{align*}
	M(\Delta)
	\begin{pmatrix}
		r_1\\ r_2 \\ \vdots \\ r_n
	\end{pmatrix}
	\equiv 0 ~(\bmod1),\quad r_i \in \Q \cap [0,1).
\end{align*}
It's easy to prove that $S(\Delta)$ is an abelian group and its order is given by
\begin{align}\label{eq2}
	\left|\det{M(\Delta)}\right|=n!\Vol(\Delta).
\end{align}
Let $S_p(\Delta)$ denote the prime to $p$ part of $S(\Delta)$. It is an abelian subgroup of order equal to the prime to $p$ factor of $\det{M(\Delta)}$. In particular, $S_p(\Delta)=S(\Delta)$ if $p$ is relatively prime to $\det{M(\Delta)}$.
 By the Stickelberger theorem for Gauss sums, we have the following ordinary criterion for a non-degenerate Laurent polynomial\cite{Wan2004}.
\begin{Prop}\label{prop15}
A diagonal Laurent polynomial $f$ is ordinary at $p$ if and only if the norm function $|r|=r_1+\cdots +r_n$ on $S_{p}(\Delta)$ is stable under the p-action: That is, for each $r\in S_{p}(\Delta)$, we have
$|r|=|\!\left\{pr\right\}\!|$, where $\left\{pr\right\}$ is the class of $pr$ in $S_{p}(\Delta)$.
\end{Prop}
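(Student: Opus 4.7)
The plan is to use Dwork's trace formula (Theorem \ref{th211}) combined with the chain-level ordinarity criterion (Proposition \ref{prop c}) and Stickelberger's theorem for Gauss sums. The key feature of the diagonal case is that the infinite Frobenius matrix $A_1(f)$ can be essentially diagonalized in a way that makes each eigenvalue an explicit Gauss sum, whose $p$-adic valuation is computed by Stickelberger.

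First, I would exploit the diagonality of $f$. Because $M(\Delta)$ is an $n\times n$ invertible integer matrix, each lattice point $r\in L(\Delta)$ decomposes uniquely as $r=r_0+\sum_{j=1}^n u_jV_j$ with $u_j\in\Z_{\geq 0}$ and $r_0$ in a fixed fundamental domain parametrized by $S(\Delta)$. Under this parametrization and after extracting the $p$-part (which corresponds to $\{\pi^{w(r)}\}$-normalization), the orbits of $S_p(\Delta)$ under the map $r\mapsto \{pr\}$ decompose $A_1(f)$ into blocks, one for each orbit $O=\{r,\{pr\},\{p^2r\},\ldots,\{p^{d-1}r\}\}$ where $d$ is the orbit length. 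The basic calculation, using $F_r(f)=\sum\prod\lambda_{u_j}a_j^{u_j}\pi^{\sum u_j}$, shows that the corresponding block of $A_a(f)$ is upper-triangular with a single nonzero eigenvalue essentially given by a product of Gauss sums $\prod_{s\in O}G_s$ times a power of $\pi$ matching the minimal weight contribution.

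Second, I would invoke Stickelberger's theorem: for each $s\in S_p(\Delta)$, the associated Gauss sum satisfies $\ord_q G_s = |s|$ (with the standard normalization compatible with Dwork's setup), so that the block eigenvalue associated to the orbit $O$ has $q$-adic slope exactly $\sum_{s\in O}|s|$. Meanwhile, the chain-level Hodge bound $P(\Delta)$ assigns to this block the slope sum $\sum_{s\in O}w(s)$. By Proposition \ref{prop c}, NP$(f)=$HP$(\Delta)$ is equivalent to each block attaining its Hodge bound, that is,
\begin{equation*}
\sum_{s\in O}|s|=\sum_{s\in O}w(s) \qquad \text{for every orbit } O\subset S_p(\Delta).
\end{equation*}

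Third, I would translate this orbit-wise identity into the claimed pointwise stability. For diagonal $f$, the weight $w(s)$ of any $s\in S_p(\Delta)$ satisfies $w(s)=|s|$ (one checks this from the definition of $w$ using that $S_p(\Delta)$ sits inside the fundamental parallelepiped of $M(\Delta)$). Since always $|\{ps\}|\leq p|s|$ coordinatewise modulo $1$, and since $w(\{ps\})\leq |\{ps\}|$, the orbit sum equality forces $|\{ps\}|=|s|$ for every $s$ in the orbit; conversely, if $|r|=|\{pr\}|$ for every $r$, the orbit sum equality is trivial.

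The main obstacle will be the first step: carefully verifying that the diagonalization of $A_1(f)$ by $S_p(\Delta)$-orbits really does produce blocks whose Fredholm determinant is, up to the expected power of $\pi$, a single product of Gauss sums. This requires tracking the Teichmüller liftings $a_j$, the Artin--Hasse coefficients $\lambda_{u_j}$, and the unique minimal-weight representation $r=r_0+\sum u_jV_j$ in each residue class of $S_p(\Delta)$, and then identifying the resulting sum as a Gauss sum in the Gross--Koblitz / Stickelberger normalization. Once this identification is in place, the rest is the standard Stickelberger computation combined with Proposition \ref{prop c}.
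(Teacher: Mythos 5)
Your overall route---Dwork's trace formula, the decomposition of the diagonal Frobenius matrix into blocks indexed by the orbits of $S_p(\Delta)$ under $r\mapsto\{pr\}$, and Stickelberger's theorem identifying the block eigenvalues with products of Gauss sums---is the standard one (the paper does not reprove this proposition; it cites it to \cite{Wan2004}). But your third step contains a genuine error that vacates the conclusion. For a diagonal $f$ one indeed has $w(s)=|s|$ for every $s\in S_p(\Delta)$, so the orbit-wise identity you arrive at, $\sum_{s\in O}|s|=\sum_{s\in O}w(s)$, holds for \emph{every} orbit and \emph{every} $p$: it is nothing more than the statement that the endpoints of the Newton and Hodge polygons of that block coincide (cf.\ Theorem \ref{thas}). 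A tautology cannot be equivalent to ordinarity, and the subsequent deduction that ``the orbit sum equality forces $|\{ps\}|=|s|$'' is a non sequitur. (Note also that $|\{ps\}|$ can be larger or smaller than $|s|$; for $s=1/3$ and $p=2$ one has $|\{ps\}|=2/3>1/3$, so no one-sided inequality is available to squeeze with.)

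The correct comparison is between the full multisets of slopes, not their sums. The block attached to an orbit $O=\{r,\{pr\},\dots,\{p^{d-1}r\}\}$ is a cyclic generalized permutation matrix, so it contributes a factor $1-\alpha T^{d}$ with $\ord_q\alpha=\sum_{s\in O}|s|$ by Stickelberger; hence it yields $d$ Newton slopes, \emph{all equal} to the orbit average $\frac{1}{d}\sum_{s\in O}|s|$. The Hodge polygon assigns to the same block the $d$ slopes $|s|$, $s\in O$, individually. Equality of the two polygons therefore forces the constant multiset to coincide with $\{|s|:s\in O\}$, i.e.\ $|\cdot|$ must be constant on each orbit, which is exactly the condition $|r|=|\{pr\}|$ for all $r\in S_p(\Delta)$; conversely, constancy on every orbit makes each block attain its Hodge bound, and Proposition \ref{prop c} then gives $\mathrm{NP}(f)=\mathrm{HP}(\Delta)$. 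With this repair, and granting the block-diagonalization and Gauss-sum identification that you correctly flag as the main technical work of your first step, the argument goes through.
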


\section{Proof of the Main Theorems}\label{sec3}
We prove the main theorems in this section. 

\subsection{Proof of Theorem \ref{thm0}}\label{subsec31}
Recall that for integer $n\geq 1$, the twisted inverted $n$-variable Kloosterman sum is defined to be
\begin{equation*}
	S_n(\chi,b)=\mathop{\sum_{x_1\cdots x_{n+1}=b}}_{x_1+\cdots+x_{n+1}\neq 0} \chi_1(x_1)\cdots\chi_{n+1}(x_{n+1})\psi\left(\frac{1}{x_1+\cdots+x_{n+1}}\right),
\end{equation*}
where $b\in\F_{q}^*$, $\psi: \F_{q} \rightarrow \mathbb{C}^*$ is a nontrivial additive character and $\chi_1,\ldots,\chi_{n+1}: \F_{q}^* \rightarrow \mathbb{C^*}$ are multiplicative characters.
Let $\chi: \F_{q}^* \rightarrow \mathbb{C^*}$ denote a multiplicative character. By the orthogonality of characters, we have
\begin{align}\label{equs}
	S_n(\chi,b)=&\frac{1}{q(q-1)}\sum_{\lambda, x_i\in\F^*_q} \sum_{u\in\F_q}\psi\left(u\left(x_1+\cdots+x_{n+1}-\lambda\right)\right) \chi_1(x_1)\cdots\chi_{n+1}(x_{n+1})\nonumber\\
	&\times\psi\left(\frac{1}{\lambda}\right)\sum_{\chi}\chi\left(\frac{x_1\cdots\chi_{n+1}}{b}\right)\nonumber\\
	=&\frac{1}{q(q-1)}\sum_{\lambda\in\F^*_q}\sum_{x_i\in\F^*_q}
	\chi_1(x_1)\cdots\chi_{n+1}(x_{n+1})
	\psi\left(\frac{1}{\lambda}\right)\sum_{\chi}\chi\left(\frac{x_1\cdots\chi_{n+1}}{b}\right)\nonumber\\
	&+\frac{1}{q(q-1)}\sum_{\lambda\in\F^*_q}\sum_{x_i\in\F^*_q} \sum_{u\in\F_q^*}\psi\left(u\left(x_1+\cdots+x_{n+1}-\lambda\right)\right) \nonumber\\
	&\quad\times\chi_1(x_1)\cdots\chi_{n+1}(x_{n+1})\psi\left(\frac{1}{\lambda}\right)\sum_{\chi}\chi\left(\frac{x_1\cdots\chi_{n+1}}{b}\right)\nonumber\\
	=&S_1+S_2.
\end{align}
Then
\begin{align}\label{equs1}
	S_1&=\frac{1}{q(q-1)}\sum_{\lambda\in\F^*_q}\psi\left(\frac{1}{\lambda}\right)\sum_{\chi}\chi^{-1}(b)\sum_{x_i\in\F^*_q}
	\left(\chi\chi_1\right)(x_1)\cdots\left(\chi\chi_{n+1}\right)(x_{n+1})\nonumber\\
	&=\frac{1}{q(q-1)}\sum_{\lambda\in\F^*_q}\psi\left(\frac{1}{\lambda}\right)
	\sum_{\chi}\chi^{-1}(b)\prod^{n+1}_{i=1}\left(\sum_{x_i\in\F^*_q}\left(\chi\chi_i\right)(x_i)\right)\nonumber\\
	&=\begin{cases}
		\displaystyle -\frac{(q-1)^{n}}{q}\chi_1(b), & \text{if}\ \chi_1=\cdots=\chi_{n+1},\\
		\displaystyle 0, & \text{otherwise}.
	\end{cases}
\end{align}
If $\chi$ is trivial, the Gauss sum $G(\chi)=-1$. If $\chi$ is non-trivial, $|G(\chi)|=\sqrt{q}$. Then
\begin{align}\label{equs2}
	S_2&=\frac{1}{q(q-1)}\sum_{\lambda,u\in\F^*_q}\sum_{\chi}\chi^{-1}(b)\sum_{x_i\in\F^*_q}
	\left(\chi\chi_1\right)(x_1)\psi(ux_1)\cdots\left(\chi\chi_{n+1}\right)(x_{n+1})\psi(ux_{n+1})\nonumber\\
	&\quad\times\psi(-u\lambda)\psi\left(\frac{1}{\lambda}\right)\nonumber\\
	&=\frac{1}{q(q-1)}\sum_{\lambda,u\in\F^*_q}\sum_{\chi}\chi^{-1}(b)\overline{\chi^{n+1}\chi_1\cdots\chi_{n+1}}(u)
	\psi(-u\lambda)\psi\left(\frac{1}{\lambda}\right)
	G(\chi\chi_1)\cdots G(\chi\chi_{n+1})\nonumber\\
	&=\frac{1}{q(q-1)}\sum_{\chi}\chi^{-1}(b)\left(\sum_{\lambda\in\F^*_q}
	\overline{\chi^{n+1}\chi_1\cdots\chi_{n+1}}\left(-\frac{1}{\lambda}\right)\psi\left(\frac{1}{\lambda}\right)\right)
	G(\overline{\chi^{n+1}\chi_1\cdots\chi_{n+1}})\nonumber\\
	&\quad\times G(\chi\chi_1)\cdots G(\chi\chi_{n+1})\nonumber\\
	&=\frac{1}{q(q-1)}\sum_{\chi}\chi^{-1}(b)\chi^{n+1}\chi_1\cdots\chi_{n+1}(-1)
	G(\overline{\chi^{n+1}\chi_1\cdots\chi_{n+1}})G(\overline{\chi^{n+1}\chi_1\cdots\chi_{n+1}})\nonumber\\ 
	&\quad\times G(\chi\chi_1)\cdots G(\chi\chi_{n+1}).
\end{align}
Since $|G(\chi)| \leq \sqrt{q}$, it follows that $|S_2|\leq q^{\frac{n+1}{2}}$. Combining (\ref{equs}) and (\ref{equs1}), we can deduce the following bounds. 
\begin{align*}
	\left|S_n(\chi,b)+\frac{(q-1)^{n}}{q}\chi_1(b)\right| \leq q^{\frac{n+1}{2}}, \quad \text{if}\ \chi_1=\cdots=\chi_{n+1},
\end{align*}
and
\begin{align*}
	\left|S_n(\chi,b)\right| \leq q^{\frac{n+1}{2}}, 
	\quad \text{if}\ \chi_i\neq\chi_j \ \text{for some}\ i\neq j.
\end{align*}
This proves Theorem \ref{thm0}.

\subsection{Proof of Theorem \ref{thm2}}\label{subsec32}
The twisted inverted Kloosterman sum $S_{n}(\chi,b)$ has the expression
\begin{align}\label{equ32}
	S_{n}(\chi,b)&=\mathop{\sum_{x_1+\cdots+x_{n}+\frac{b}{x_1\cdots x_{n}}\neq 0}}_{x_i\in\F_q^*} \chi_1(x_1)\cdots\chi_{n}(x_n)\chi_{n+1}\left(\frac{b}{x_1\cdots x_{n}}\right)\nonumber\\
	&\quad\times\psi\left(\frac{1}{x_1+\cdots+x_{n}+\frac{b}{x_1\cdots x_{n}}}\right) \nonumber\\
	&=\mathop{\sum_{z\left(x_1+\cdots+x_{n}+\frac{b}{x_1\cdots x_{n}}\right)=1}}_{z,\ x_i\in\F_q^*} \chi_{n+1}(b)\left(\chi_1\overline{\chi_{n+1}}\right)\!(x_1)\cdots\left(\chi_n\overline{\chi_{n+1}}\right)\!(x_n)\psi\left(z\right)\nonumber\\
	&=\frac{1}{q}\mathop{\sum_{z,\ x_i\in\F_q^*}}_{y\in\F_q} \chi_{n+1}(b)
	\left(\chi_1\overline{\chi_{n+1}}\right)\!(x_1)\cdots\left(\chi_n\overline{\chi_{n+1}}\right)\!(x_n)\nonumber\\
	&\quad\times\psi\left(z+y\left(1-z\left(x_1+\cdots+x_{n}+\frac{b}{x_1\cdots x_{n}}\right)\right)\right)\nonumber\\
	&=\frac{\chi_{n+1}(b)}{q}\left(\sum_{z,\ x_i\in\F_q^*}\left(\chi_1\overline{\chi_{n+1}}\right)\!(x_1)\cdots\left(\chi_n\overline{\chi_{n+1}}\right)\!(x_n)\psi\left(z\right)+E_n(\chi,b)\right)\nonumber\\
	&=\begin{cases}
		\displaystyle-\frac{(q-1)^n}{q}\chi_1(b)+\frac{1}{q}\chi_{1}(b)E_n(\chi,b), & \text{if}\ \chi_1=\cdots=\chi_{n+1},\\
		\displaystyle\frac{1}{q}\chi_{n+1}(b)E_n(\chi,b), & \text{if}\ \chi_i\neq \chi_j\ \text{for some}\ i\neq j,
	\end{cases}
\end{align}
where 
\begin{align*}
	E_n(\chi,b)=&\sum_{y, z, x_i\in\F_q^*}\left(\chi_1\overline{\chi_{n+1}}\right)\!(x_1)\cdots
	\left(\chi_n\overline{\chi_{n+1}}\right)\!(x_n)\nonumber\\
	&\times\psi\left(z+y\left(1-z\left(x_1+\cdots+x_{n}+\frac{b}{x_1\cdots x_{n}}\right)\right)\right).
\end{align*}
In order to prove Theorem \ref{thm2}, it suffices to estimate $E_n(\chi,b)$.

Let $f\in \F_{q}[x_1^{\pm1},\ldots, x_{n+2}^{\pm1}]$ be the Laurent polynomial defined by  
\begin{align*}
	f(x_1,\cdots,x_{n+2})=x_{n+1}\left(1-x_{n+2}\left(x_1+\cdots+x_{n}+\frac{b}{x_1\cdots x_{n}}\right)\right)+x_{n+2}.
\end{align*}
As defined in (\ref{equ1}), $E_n(\chi,b)$ is the twisted toric exponential sum associated to $f$.
Let $\Delta=\Delta(f)$ denote the Newton polyhedron corresponding to $f$.  Clearly, $\dim\Delta=n+2$ and $\Delta$ has $n+4$ vertices in $\R^{n+2}$: $V_0 = (0,\cdots,0)$(the origin), $V_1 = (1,0,\cdots,0,1,1)$, $V_2 = (0,1,\cdots,0,1,1)$, \ldots, $V_{n} = (0,0,\cdots,1,1,1)$, $V_{n+1} = (-1,\cdots,-1,1,1)$, $V_{n+2} = (0,\cdots,0,1,0)$ and $V_{n+3} = (0,\cdots,0,0,1)$.
Furthermore, $\Delta$ has exactly 2 co-dimension 1 faces not containing the origin. Explicitly, they are
\begin{align*}
	\delta_1: x_{n+1}=1 \quad \text{and} \quad \delta_2: x_{n+2}=1.
\end{align*}
Vertices $V_1,\ldots,V_{n+2}$ determine the face $\delta_1$ and vertices $V_1,\ldots,V_{n+1},V_{n+3}$ determine the face $\delta_2$. 	Let $M(\delta_i)$ be the vertex matrix of $\delta_i$, we have
\begin{align}\label{eqd}
	M(\delta_1)=
	\begin{pmatrix}
		1 & 0 & \cdots & 0 & -1 & 0\\ 
		0 & 1 & \cdots & 0 & -1 & 0\\ 
		\vdots & \vdots & \ddots & \vdots & \vdots & \vdots\\ 
		0 & 0 & \cdots & 1 & -1 & 0\\ 
		1 & 1 & \cdots & 1 & 1 & 1\\ 
		1 & 1 & \cdots & 1 & 1 & 0
	\end{pmatrix},
	\qquad
	M(\delta_2)=
	\begin{pmatrix}
		1 & 0 & \cdots & 0 & -1 & 0\\ 
		0 & 1 & \cdots & 0 & -1 & 0\\ 
		\vdots & \vdots & \ddots & \vdots & \vdots & \vdots\\ 
		0 & 0 & \cdots & 1 & -1 & 0\\ 
		1 & 1 & \cdots & 1 & 1 & 0\\ 
		1 & 1 & \cdots & 1 & 1 & 1
	\end{pmatrix}.
\end{align}
Explicitly, each $f^{\delta_i}$ is diagonal for $i=1,2$. The restriction of $f$ to $\delta_i$ is defined by 
\begin{align*}
	f^{\delta_i}=\sum_{V_j\in\delta_i}a_jx^{V_j}.
\end{align*}

\begin{figure}[htbp] 
	\centering 	
	\begin{tikzpicture}
		\node[right] (0) at (4.3,0) {$V_0$};
		\node[above] (1) at (0,3.3) {$V_1$};
		\node[right] (2) at (7,4.3) {$V_2$};
		\node[above] (3) at (3.5,2.5) {$V_3$};
		\node[above] (4) at (4.3,1.5) {$V_4$};
		\fill[fill=blue] (4.3,0) circle (2pt);
		\fill[fill=blue] (0,3.3) circle (2pt);
		\fill[fill=blue] (7,4.3) circle (2pt);
		\fill[fill=blue] (3.5,2.5) circle (2pt);
		\fill[fill=blue] (4.3,1.5) circle (2pt);
		\draw (0,3.3)--(4.3,0)--(7,4.3)--(0,3.3);
		\draw (0,3.3)--(4.3,1.5)--(7,4.3) ;
		\draw (4.3,0)--(4.3,1.5) ;
		\draw [dashed] (0,3.3)--(3.5,2.5)--(7,4.3) ;
		\draw [dashed] (4.3,0)--(3.5,2.5) ;
	\end{tikzpicture}
	\caption{$\Delta$ for $n=1$} 
\end{figure}
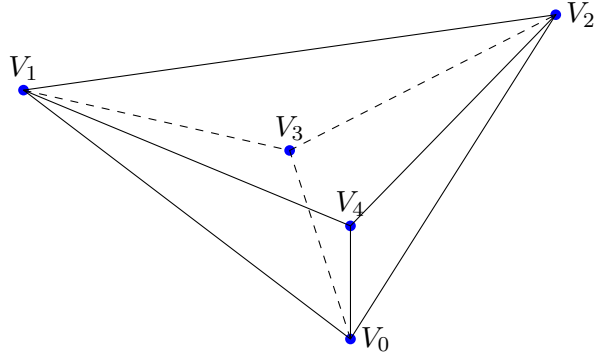

\begin{Prop}\label{th5}
	\begin{enumerate}[(i).]
		\item The denominator $D=1$.
		\item $f$ is non-degenerate if and only if $p\nmid (n+1)$.
		\item $\displaystyle\Vol(\Delta)=\frac{2n+2}{(n+2)!}$.
	\end{enumerate}
\end{Prop}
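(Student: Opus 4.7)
The approach is to exploit the structural fact that $\Delta$ has exactly two facets not containing the origin: $\delta_1 = \{x_{n+1}=1\} \cap \Delta$ and $\delta_2 = \{x_{n+2}=1\} \cap \Delta$, both of which are $(n+1)$-dimensional simplices on which $f$ restricts to a \emph{diagonal} Laurent polynomial. All three parts reduce to the determinantal analysis of $M(\delta_1)$ and $M(\delta_2)$ recorded in the statement. Part (i) is immediate from inspection: the defining equations $x_{n+1}=1$ and $x_{n+2}=1$ are already normalized with integer coefficients and constant term $1$, so $D(\delta_1)=D(\delta_2)=1$, giving $D=1$.

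For part (iii), I would triangulate $\Delta$ by the two $(n+2)$-simplices $\Sigma_i := \mathrm{conv}(\{V_0\} \cup \delta_i)$ obtained by coning each facet from the origin. The affine identity $V_{n+2} + V_{n+3} = V_0 + \tfrac{1}{n+1}\sum_{j=1}^{n+1} V_j$, arising from the vector relation $V_1 + \cdots + V_{n+1} = (n+1)(V_{n+2} + V_{n+3})$, lets one rewrite any convex representation of a point of $\Delta$ so that either $t_{n+2}=0$ or $t_{n+3}=0$; hence $\Delta = \Sigma_1 \cup \Sigma_2$. The two simplices lie on opposite sides of the hyperplane $x_{n+1}=x_{n+2}$, so $\Sigma_1 \cap \Sigma_2 = \mathrm{conv}(V_0, V_1, \ldots, V_{n+1})$ is $(n+1)$-dimensional and has volume zero in $\R^{n+2}$. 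Since $\Vol(\Sigma_i) = |\det M(\delta_i)|/(n+2)!$, it remains to compute these determinants. Expanding $\det M(\delta_i)$ along the unique nonzero entry in the column coming from $V_{n+2}$ (respectively $V_{n+3}$) reduces to the $(n+1)\times(n+1)$ matrix with identity block $I_n$, column $-\mathbf{1}_n$, row $\mathbf{1}_n^T$, and scalar $1$; by Schur complement its determinant equals $1 + \mathbf{1}_n^T\mathbf{1}_n = n+1$. Hence $|\det M(\delta_i)| = n+1$ and $\Vol(\Delta) = 2(n+1)/(n+2)!$.

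For part (ii), since $f^{\delta_1}$ and $f^{\delta_2}$ are diagonal with $|\det M(\delta_i)|=n+1$, Proposition \ref{propc1} immediately yields non-degeneracy on these facets iff $p \nmid (n+1)$, establishing necessity. For sufficiency, every other face not containing the origin is a proper sub-simplex of $\delta_1$ or $\delta_2$, and a short case analysis isolates $\delta_1 \cap \delta_2 = \mathrm{conv}(V_1, \ldots, V_{n+1})$ as the only other potentially problematic face: any sub-simplex of $\delta_1$ that contains $V_{n+2}$ carries the constant term $1$ in $\partial f^\tau/\partial x_{n+1}$ coming from the monomial $x_{n+1}$, which combined with $\partial f^\tau/\partial x_{n+2}$ has no common torus zero, while any proper sub-simplex of $\delta_1 \cap \delta_2$ omits some $V_j$, leaving $\partial/\partial x_j$ as a single nonzero monomial on the torus. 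For $\delta_1 \cap \delta_2$ itself, one computes
\begin{equation*}
f^{\delta_1 \cap \delta_2} = -x_{n+1}x_{n+2}\bigl(x_1+\cdots+x_n+\tfrac{b}{x_1\cdots x_n}\bigr),
\end{equation*}
and a direct analysis shows its partial derivatives share a common torus zero iff $x_1=\cdots=x_n=c$ with $c^{n+1}=b$ and $(n+1)c=0$, i.e., iff $p \mid (n+1)$. Combining with the symmetric analysis for sub-simplices of $\delta_2$, $f$ is non-degenerate iff $p \nmid (n+1)$.

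The main obstacle is the case analysis in part (ii): since non-degeneracy must be checked face by face, the facet determinant computation alone is not sufficient, and one must separately identify $\delta_1 \cap \delta_2$ as the unique additional critical face and verify by hand that it yields the same condition $p \nmid (n+1)$ as the two facets do via Proposition \ref{propc1}.
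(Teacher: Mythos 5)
Your proof is correct and follows essentially the same route as the paper: decompose $\Delta$ along the two facets $\delta_1,\delta_2$ not containing the origin, observe that the restrictions $f^{\delta_i}$ are diagonal, and compute $|\det M(\delta_i)|=n+1$; parts (i) and (iii) then coincide with the paper's argument. The one divergence is in (ii): the paper simply invokes Proposition \ref{propc1}, and since ``non-degenerate'' for the diagonal polynomial $f^{\delta_i}$ already quantifies over \emph{all} closed faces of the simplex $\delta_i$ not containing the origin (that is exactly Definition \ref{def1} applied to $f^{\delta_i}$), the condition $p\nmid\det M(\delta_i)$ for $i=1,2$ settles every lower-dimensional face at once. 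Your separate face-by-face verification --- in particular the explicit computation on $\delta_1\cap\delta_2$, where a common critical point forces $x_1=\cdots=x_n=c$ with $c^{n+1}=b$ and $(n+1)c=0$ --- is correct but logically redundant; its only payoff is as a concrete check that the obstruction $p\mid(n+1)$ really does come from the interior face as well as from the two facet determinants.
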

\begin{proof}
	The denominator $D=1$ can be deduced immediately from the equation of $\delta_i$.
	Since $\delta_1$ and $\delta_2$ are the co-dimension 1 faces of $\Delta(f)$ not containing the origin, it suffices to prove $f^{\delta_1}$ and $f^{\delta_2}$ are non-degenerate.
	By Proposition \ref{propc1}, $f^{\delta_i}$ is non-degenerate if and only if $p$ is relatively prime to $\det(M(\delta_i))$. By formula (\ref{eqd}), 
	\begin{align}\label{eqd1}
		\det(M(\delta_1))=-(n+1)
		\quad \text{and} \quad
		\det(M(\delta_2))=n+1.
	\end{align}
	This proves $(ii)$. 
	
	Let $\Delta_i$ be the polytope generated by $\delta_i$ and the origin. The facial decomposition of $\Delta$ 
	implies that
	\begin{align*}
		\Vol(\Delta)=\Vol(\Delta_1)+\Vol(\Delta_2).
	\end{align*}
	By formula (\ref{eq2}) and (\ref{eqd1}), we obtain $(iii)$. 
\end{proof}

Combining Theorem \ref{thm3} with Proposition \ref{th5}, if $p\nmid (n+1)$, we have
\begin{align}\label{eque}
	|E_n(\chi,b)|\leq (n+2)!\Vol(\Delta)\cdot q^{\frac{n+2}{2}}=2(n+1)q^{\frac{n+2}{2}},
\end{align}
where $p$ is the characteristic of $\F_q$. 
Putting (\ref{equ32}) and (\ref{eque}) together, we then obtain the following bounds when $p\nmid(n+1)$.
\begin{align*}
	|S_{n}(\chi,b)+\frac{(q-1)^n}{q}\chi_1(b)|\leq 2(n+1)q^{\frac{n}{2}}, \quad \text{if}\ \chi_1=\cdots=\chi_{n+1},
\end{align*}
and
\begin{align*}
	|S_{n}(\chi,b)|\leq 2(n+1)q^{\frac{n}{2}}, \quad \text{if}\ \chi_i\neq \chi_j\ \text{for some}\ i\neq j.
\end{align*}

In the case $\chi_1=\cdots=\chi_{n+1}$, the twisted sum $E_n(\chi, b)$ 
becomes the following untwisted toric exponential sum
\begin{align*}
	E_n(\chi,b)=&\sum_{y, z, x_i\in\F_q^*}\psi\left(z+y\left(1-z\left(x_1+\cdots+x_{n}+\frac{b}{x_1\cdots x_{n}}\right)\right)\right).
\end{align*}
Since the origin is a vertex of $\Delta$ and the polynomial inside the additive character has no constant term, $1$ is a trivial 
eigenvalue of the middle dimensional cohomology. Removing this trivial eigenvalue from the error term, one gets 
\begin{align*}
	|E_{n}(\chi,b) -(-1)^{n+2}|\leq (2n+1)q^{\frac{n}{2}}, \quad \text{if}\ \chi_1=\cdots=\chi_{n+1}, 
\end{align*}
and hence the 
slightly sharper estimate
\begin{align*}
	|S_{n}(\chi,b)+\frac{(q-1)^n+(-1)^{n+1}}{q}\chi_1(b)|\leq (2n+1)q^{\frac{n}{2}}, \quad \text{if}\ \chi_1=\cdots=\chi_{n+1}. 
\end{align*}
This proves Theorem \ref{thm2}.

\subsection{Proof of Theorem \ref{thm1}}\label{subsec33}
Similar to formula (\ref{equ32}), we relate the untwisted inverted Kloosterman sum $S_{k,n}(b)$ to toric exponential sum $S_k^*(f)$.
\begin{align}\label{eq32}
	S_{k,n}(b)&=\mathop{\sum_{x_1+\cdots+x_{n}+\frac{b}{x_1\cdots x_{n}}\neq 0}}_{x_i\in\F_{q^k}^*} \psi\left(\Tr_k\left(\frac{1}{x_1+\cdots+x_{n}+\frac{b}{x_1\cdots x_{n}}}\right)\right) \nonumber\\
	&=\mathop{\sum_{z\left(x_1+\cdots+x_{n}+\frac{b}{x_1\cdots x_{n}}\right)=1}}_{z,\ x_i\in\F_{q^k}^*} \psi\left(\Tr_k\left(z\right)\right)\nonumber\\
	&=\frac{1}{q^k}\mathop{\sum_{z,\ x_i\in\F_{q^k}^*}}_{y\in\F_{q^k}}
	\psi\left(\Tr_k\left(z+y\left(1-z\left(x_1+\cdots+x_{n}+\frac{b}{x_1\cdots x_{n}}\right)\right)\right)\right)\nonumber\\
	&=-\frac{(q^k-1)^{n}}{q^k}+\frac{1}{q^k}S_k^*(f).
\end{align}
where $f$ is the Laurent polynomial given by
\begin{align*}
	f(x_1,\cdots,x_{n+2})=x_{n+1}\left(1-x_{n+2}\left(x_1+\cdots+x_{n}+\frac{b}{x_1\cdots x_{n}}\right)\right)+x_{n+2}
\end{align*}
and
\begin{align*}
	S_k^*(f)=&\sum_{x_i\in\F_{q^k}^*}
	\psi\left(\Tr_k\left(x_{n+2}+x_{n+1}\left(1-x_{n+2}\left(x_1+\cdots+x_{n}+\frac{b}{x_1\cdots x_{n}}\right)\right)\right)\right).
\end{align*}

The L-functions associated to $S_{k,n}(b)$ and $S_k^*(f)$ are defined as
\begin{align*}
	\LF_{n}(b,T)=\exp \left(\sum^\infty _ {k=1} S_{k,n}(b) \frac{T^k}{k} \right)
	\quad\text{and}\quad 
	\LF^*(f,T)=\exp \left(\sum^\infty _ {k=1} S^*_k (f) \frac{T^k}{k} \right).
\end{align*}
It follows from formula (\ref{eq32}) that 
\begin{align}\label{eq34}
	\LF_{n}(b,T)&=\exp \left(\sum^\infty _ {k=1} -\frac{\left(q^k-1\right)^n\cdot T^k}
	{q^k\cdot k} \right)	\LF^*\left(f,T/q\right)\nonumber\\
	&=\prod_{i=0}^{n} \exp \left((-1)^{n-i+1}\binom{n}{i}\sum^\infty _ {k=1} \frac{\left(q^{i-1}T\right)^k}{k} \right)	\LF^*\left(f,T/q\right)\nonumber\\
	&=\LF^*\left(f,T/q\right)
	\prod_{i=0}^{n}\left(\frac{1}{1-q^{i-1}T}\right)^{(-1)^{n-i+1}\binom{n}{i}} .
\end{align}

The main purpose of this subsection is to determine the slopes and weights of \ $\LF_{n}(b,T)$. Based on formula (\ref{eq34}), it suffices to consider \ $\LF^*(f,T)$ instead. Let $\Delta=\Delta(f)$ denote the Newton polyhedron corresponding to $f$. Some of the geometric properties about $\Delta$ have been discussed in subsection \ref{subsec32}. In Proposition \ref{th5}, we proved that $f$ is non-degenerate if and only if $p\nmid (n+1)$. In this case, the L-function $\LF^*(f,T)^{(-1)^{n+1}}$ is a polynomial of degree $2n+2$. To determine the slopes of the reciprocal roots of $\LF^*(f,T)^{(-1)^{n+1}}$, we shall compute the Hodge polygon and consider when it coincides with the Newton polygon.

\begin{Prop}\label{propo}
	The Laurent polynomial $f$ is ordinary if and only if $p\equiv 1\bmod (n+1)$.
\end{Prop}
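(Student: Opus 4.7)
The plan is to apply the facial decomposition theorem (Theorem \ref{thm9}) together with the diagonal ordinary criterion (Proposition \ref{prop15}). By Proposition \ref{th5}, the Newton polyhedron $\Delta = \Delta(f)$ has exactly two codimension-1 faces $\delta_1$ and $\delta_2$ that do not contain the origin, and each restriction $f^{\delta_i}$ is diagonal (involving exactly $n+2$ monomials). Thus $f$ is ordinary if and only if both $f^{\delta_1}$ and $f^{\delta_2}$ are ordinary, and each of these can be analyzed via the group $S_p(\delta_i)$.

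The next step is to compute $S(\delta_i)$ explicitly using the vertex matrices in $(\ref{eqd})$. Since we are assuming $p \nmid (n+1)$ so that $\det M(\delta_i) = \pm(n+1)$ is prime to $p$, we have $S_p(\delta_i) = S(\delta_i)$. Solving $M(\delta_1)\mathbf{r} \equiv 0 \pmod 1$ with $r_j \in [0,1)$ forces $r_{n+2} = 0$ (from the difference of the last two rows), $r_j \equiv r_{n+1} \pmod 1$ for $j=1,\ldots,n$, and hence $(n+1)r_{n+1} \equiv 0 \pmod 1$. This yields the $n+1$ solutions
\begin{equation*}
  r^{(k)} = \Bigl(\tfrac{k}{n+1},\ldots,\tfrac{k}{n+1},\tfrac{k}{n+1},0\Bigr),\qquad k=0,1,\ldots,n,
\end{equation*}
with norm $|r^{(k)}| = k$. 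The same calculation works verbatim for $\delta_2$ (the roles of $r_{n+1}$ and $r_{n+2}$ are simply swapped), giving an isomorphic group of solutions with the same norm values.

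Now I check the $p$-stability condition in Proposition \ref{prop15}. For the solution $r^{(k)}$, multiplication by $p$ followed by reduction mod $1$ in each coordinate sends $r^{(k)}$ to $r^{(k')}$ where $k' \equiv pk \pmod{n+1}$, and accordingly $|\{p r^{(k)}\}| = k'$. Thus the norm is $p$-stable on $S(\delta_i)$ precisely when
\begin{equation*}
  (p-1)k \equiv 0 \pmod{n+1} \text{ for all } k=0,1,\ldots,n,
\end{equation*}
which (taking $k=1$) is equivalent to $p \equiv 1 \pmod{n+1}$. Combining the two faces via the facial decomposition theorem gives the claimed equivalence.

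The core of the argument is routine once the group $S(\delta_i)$ is pinned down; the only real obstacle is the row-reduction of the vertex matrices $M(\delta_i)$ in $(\ref{eqd})$, but since each is a bordered identity block with two extra coordinates recording the last two variables, the solution set has a transparent cyclic structure of order $n+1$. No deeper tools beyond Stickelberger's theorem (packaged in Proposition \ref{prop15}) and the facial decomposition are needed.
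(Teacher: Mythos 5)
Your proof is correct and follows essentially the same route as the paper: facial decomposition to reduce to the two diagonal restrictions $f^{\delta_1}$, $f^{\delta_2}$, explicit computation of the cyclic group $S(\delta_i)$ of order $n+1$ from the vertex matrices, and the Stickelberger/diagonal ordinariness criterion of Proposition \ref{prop15}. The only cosmetic difference is that you verify $p$-stability of the norm on all $n+1$ elements, whereas the paper observes it suffices to test the single generator of norm $1$; both yield the same conclusion.
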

\begin{proof}
	By facial decomposition theorem, it suffices to consider $f^{\delta_i}$ for $i=1,2$. 
	Let $S(\delta_i)$ be the solution set of the following linear system
	\begin{align}\label{eqs1}
		M(\delta_i)
		\begin{pmatrix}
			r_1\\ r_2 \\ \vdots \\ r_{n+2}
		\end{pmatrix}
		=u\in\Z^{n+2},\quad\text{where}\ r_j \in \Q \cap [0,1).
	\end{align}
	 For $i=1$ and a given point $u=(x_1,\ldots,x_{n+2})^T$, linear system (\ref{eqs1}) equals to 
\begin{align}\label{eqs2}
	\begin{cases}
		x_1=r_1-r_{n+1},\\
		x_2=r_2-r_{n+1},\\
		\cdots\\
		x_{n}=r_{n}-r_{n+1},\\
		x_{n+1}=r_1+\cdots+r_{n+2},\\
		x_{n+2}=r_1+\cdots+r_{n+1},
	\end{cases}\quad\text{where}\ r_j \in \Q \cap [0,1).
\end{align}
	Note that $x_j\in\Z$, where $1\leq j\leq n+2$. For any $r=(r_1,\ldots,r_{n+2})^T\in S(\delta_1)$, we have
	\begin{align*}
		r_1=\cdots=r_{n}=r_{n+1}\in\frac{\Z}{n+1} \quad\text{and}\quad r_{n+2}=0.
	\end{align*}
	Let $S_p(\delta_i)$ denote the prime to $p$ part of $S(\delta_i)$. In particular, $S_p(\delta_i)=S(\delta_i)$ if $p\nmid\det(M(\delta_i))$.
	Suppose $p\nmid (n+1)$, the norm function $|r|$ and $|\{pr\}|$ are given by 
	\begin{align*}
		|r|=(n+1)r_1 \quad\text{and}\quad |\{pr\}|=(n+1)\{pr_1\}.
	\end{align*}
	Then $|r|$ on $S_p(\delta_1)$ is stable under the $p$-action if and only if $p\equiv 1 \bmod (n+1)$. 
	To see this, it suffices to consider the unique point $r=(\frac{1}{n+1}, \cdots, \frac{1}{n+1}, 0)$ with 
	norm $1$. This condition holds for $S_p(\delta_2)$ through a similar proof. 
	By Proposition \ref{prop15}, we obtain Proposition \ref{propo}.
\end{proof}

\begin{Th}\label{th6}
The $n+2$ Hodge numbers of $\Delta$ are $\{ 1, 2, 2, \cdots, 2, 1\}$. Namely, 
\begin{align*}
	H_{\Delta}(0)=1,\ H_{\Delta}(1)=\cdots=H_{\Delta}(n)=2,\ H_{\Delta}(n+1)=1.
\end{align*}

\end{Th}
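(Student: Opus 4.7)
The plan is to compute the generating function $\sum_{k \geq 0} W_\Delta(k)\, t^k$ explicitly and then extract the Hodge numbers via the identity
\[
\sum_{k \geq 0} H_\Delta(k)\, t^k \;=\; (1-t)^{n+2}\sum_{k \geq 0} W_\Delta(k)\, t^k,
\]
which is just the definition of Hodge numbers with $D = 1$ and $\dim\Delta = n+2$ from Proposition \ref{th5}.

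First I would identify the weight function. Since $\delta_1 : x_{n+1} = 1$ and $\delta_2 : x_{n+2} = 1$ are the only codimension-one faces of $\Delta$ not passing through the origin, the only non-automatic inequalities defining $c\Delta$ for $u \in C(\Delta)$ are $u_{n+1} \leq c$ and $u_{n+2} \leq c$, so $w(u) = \max(u_{n+1}, u_{n+2})$. I would then derive the cone membership criterion by writing $u = \sum_{i=1}^{n+3} c_i V_i$ with $c_i \geq 0$, solving for the $c_i$'s in terms of the auxiliary parameter $s = c_1 + \cdots + c_{n+1}$, and asking when a valid non-negative solution exists. This yields
\[
u \in C(\Delta) \iff \min(u_{n+1}, u_{n+2}) \;\geq\; L(u_1, \ldots, u_n) \;:=\; \sum_{j=1}^{n} u_j \;-\; (n+1)\min\bigl(0, u_1, \ldots, u_n\bigr).
\]

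With this description I would carry out the generating-function computation in two steps. For fixed $(u_1, \ldots, u_n) \in \Z^n$, the inner sum over $u_{n+1}, u_{n+2} \geq L$ weighted by $t^{\max(u_{n+1}, u_{n+2})}$ equals $t^L(1+t)/(1-t)^2$, via the elementary identity $\sum_{a, b \geq 0} t^{\max(a,b)} = (1+t)/(1-t)^2$. It remains to evaluate $F(t) := \sum_{u_1, \ldots, u_n \in \Z} t^{L(u_1, \ldots, u_n)}$. Substituting $u_j = v_j + m$ with $m = \min_j u_j$, $v_j \geq 0$, $\min_j v_j = 0$, the inner sum over the $v_j$'s has generating function $(1 - t^n)/(1-t)^n$ by inclusion-exclusion on which subset of the $v_j$'s vanishes, while the outer sum over $m$ splits into $m \geq 0$ (where $L = \sum v_j + nm$) and $m < 0$ (where $L = \sum v_j - m$). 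A short simplification collapses the two regimes into the clean formula $F(t) = (1 - t^{n+1})/(1-t)^{n+1}$.

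Combining the two steps gives $\sum_k W_\Delta(k)\, t^k = (1+t)(1-t^{n+1})/(1-t)^{n+3}$, and therefore
\[
\sum_k H_\Delta(k)\, t^k \;=\; \frac{(1+t)(1-t^{n+1})}{1-t} \;=\; (1+t)(1 + t + \cdots + t^n) \;=\; 1 + 2t + 2t^2 + \cdots + 2t^n + t^{n+1},
\]
which is exactly the asserted sequence. The main obstacle is establishing the cone inequality $\min(u_{n+1}, u_{n+2}) \geq L$ correctly: because $\Delta$ has $n+3$ non-origin vertices in $\R^{n+2}$, the cone $C(\Delta)$ is not simplicial, so the elimination of the $c_i$'s involves a genuine existence condition on the free parameter $s$ rather than a unique inversion. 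Once that formula and the closed-form expression for $F(t)$ are in hand, the rest is routine power-series manipulation.
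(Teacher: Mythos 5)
Your proposal is correct, and the final generating function $\sum_k W_\Delta(k)\,t^k = (1+t)(1-t^{n+1})/(1-t)^{n+3}$ agrees with the paper's $W(x)=(1+2x+\cdots+2x^n+x^{n+1})/(1-x)^{n+2}$, so the extraction of the Hodge numbers is the same routine step in both. The route to that generating function, however, is genuinely different. The paper never works with the full (non-simplicial) cone $C(\Delta)$: it invokes the facial decomposition into the two simplicial pieces $\Delta_1,\Delta_2$ spanned by the origin and $\delta_1,\delta_2$, computes $W_{\Delta_i}$ on each via the diagonal local theory (the unique solution of $M(\delta_i)r=u$ gives $w(u)=|r|$), does the same for the intersection cone spanned by $V_1,\ldots,V_{n+1}$, and assembles $W_\Delta=W_{\Delta_1}+W_{\Delta_2}-W_{\Delta_1\cap\Delta_2}$. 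You instead work globally: you identify $w(u)=\max(u_{n+1},u_{n+2})$ from the two supporting hyperplanes, derive the membership criterion $\min(u_{n+1},u_{n+2})\geq L(u_1,\ldots,u_n)$ by eliminating the redundant generator (your inequality is correct: $c_j=u_j+c_{n+1}$ forces $c_{n+1}\geq -\min(0,u_1,\ldots,u_n)$, and $s=\sum u_j+(n+1)c_{n+1}$ is minimized there, yielding exactly your $L$), and sum directly, using $\sum_{a,b\geq 0}t^{\max(a,b)}=(1+t)/(1-t)^2$ and the two-regime evaluation of $F(t)=(1-t^{n+1})/(1-t)^{n+1}$, both of which check out. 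What your approach buys is self-containedness — no facial decomposition, no separate treatment of the intersection — at the cost of the one delicate step you correctly flag, namely the existence condition in the non-simplicial cone; the paper's approach buys reuse of machinery (Propositions \ref{propc1} and \ref{prop15}) that it needs anyway for the non-degeneracy and ordinarity arguments.
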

\begin{proof}
Let $\Delta_i$ be the polytope generated by the origin and $\delta_i$. Let $u=(x_1,\ldots,x_{n+2})^T\in C(\Delta_i)$ be a lattice point with the weight $w(u)=k$, where $0\leq k\leq n+2$.
For $i=1,2$, consider the linear system (\ref{eqs1}).
Since $f^{\delta_i}$ is diagonal, system (\ref{eqs1}) has a unique solution $r=(r_1,\ldots,r_{n+2})^T$ for a fixed point $u\in C(\Delta_i)$. In this case, the weight is given by 
\begin{align*}
	w(u)=r_1+\cdots+r_{n+2}=|r|.
\end{align*}
When $i=1$, the linear equations (\ref{eqs2}) has exact one solution $u=(0,\ldots,0,k,k)^T$. Since $x_{n+2}=\sum_{j=1}^{n+1}r_j=k$ and $0\leq r_j<1$, we get the restriction $0\leq k< n+1$. The Hodge number $H_{\Delta_1}(k)$ counts the number of lattice points $u$ of weight $k/D$ in a fundamental domain: That is,
\begin{align*}
	H_{\Delta_1}(k)=
\begin{cases}
	1, & \text{for}\ \ 0\leq k< n+1,\\
	0, & \text{for}\ \ k\geq n+1.
\end{cases}	
\end{align*}
The generating function of $H_{\Delta_1}(k)$ is
\begin{align*}
	H_1(x)=1+x+\cdots+x^{n}.
\end{align*}
By formula (\ref{eq3}), we get the generating function of $W_{\Delta_1}(k)$ as follow.
\begin{align*}
	W_1(x)=\sum^{\infty}_{k=0}W_{\Delta_1}(k)x^k=\frac{H_1(x)}{(1-x)^{n+2}}
	=\frac{1-x^{n+1}}{(1-x)^{n+3}}.
\end{align*}
Let $H_2(x)$ and $W_2(x)$ be the generating function of $H_{\Delta_2}(k)$ and $W_{\Delta_2}(k)$, respectively.
Similarly, we can prove $H_2(x)=H_1(x)$ and $W_2(x)=W_1(x)$.
The polytope $\Delta_1\bigcap\Delta_2$ is determined by $V_1,\ldots,V_{n+1}$, whose
generating function is given by
\begin{align*}
	W_3(x)=\sum^{\infty}_{k=0}W_{\Delta_1\bigcap\Delta_2}(k)x^k
	=\frac{1-x^{n+1}}{(1-x)^{n+2}}.
\end{align*}
By facial decomposition, we have
\begin{align}\label{eqw}
	W_{\Delta}(k)=W_{\Delta_1}(k)+W_{\Delta_2}(k)-W_{\Delta_1\bigcap\Delta_2}(k),
\end{align}
which implies 
\begin{align*}
	W(x)=\sum^{\infty}_{k=0}W_{\Delta}(k)x^k=W_1(x)+W_2(x)-W_3(x)
	=\frac{1+2x+\cdots+2x^{n}+x^{n+1}}{(1-x)^{n+2}}.
\end{align*}
This gives the Hodge numbers of $\Delta$ via formula (\ref{eq3}), that is, 
\begin{align*}
	H_{\Delta}(0)=1,\ H_{\Delta}(1)=\cdots=H_{\Delta}(n)=2,\ H_{\Delta}(n+1)=1.
\end{align*}
\end{proof}

When $f$ is ordinary, the slopes of $\LF^*(f,T)^{(-1)^{n+1}}$ can be deduced from Theorem \ref{th6}.
\begin{Th}\label{thm34}
	If $p\equiv 1\bmod (n+1)$, the slope sequence of\ \ $\LF^*(f,T)^{(-1)^{n+1}}$ is given by 
	$$\{0, 1, 1,2, 2, \ldots, n, n, n+1\}.$$
\end{Th}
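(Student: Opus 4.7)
The plan is to reduce this to an immediate combination of the non-degeneracy/ordinary criterion (Proposition \ref{propo}), the Hodge number computation (Theorem \ref{th6}), and the general relation between the Newton polygon and the slopes of reciprocal roots (Lemma \ref{le4}).

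First I would invoke Proposition \ref{th5} to record that $p\nmid(n+1)$ (which is forced by $p\equiv 1\bmod(n+1)$, assuming $n\geq 1$), so $f$ is non-degenerate; by Theorem \ref{th2} this guarantees that $\LF^{*}(f,T)^{(-1)^{n+1}}$ is a polynomial, and hence it makes sense to speak of its full $q$-adic Newton polygon as a finite polygon of horizontal length $(n+2)!\Vol(\Delta)=2n+2$ (using Proposition \ref{th5}(iii)). Next I would apply Proposition \ref{propo} to conclude that the hypothesis $p\equiv 1\bmod(n+1)$ implies $f$ is ordinary, i.e.\ $\mathrm{NP}(f)=\mathrm{HP}(\Delta)$.

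Then I would read off the slopes of $\mathrm{HP}(\Delta)$ using Theorem \ref{th6} together with the fact that the denominator is $D=1$ (Proposition \ref{th5}(i)). By Definition \ref{def6}, the Hodge polygon starts at the origin and has a slope-$k$ segment of horizontal length $H_{\Delta}(k)$ for $k=0,1,\ldots,n+1$. Substituting the Hodge numbers $H_{\Delta}(0)=1,\ H_{\Delta}(1)=\cdots=H_{\Delta}(n)=2,\ H_{\Delta}(n+1)=1$ from Theorem \ref{th6} gives horizontal slope lengths
\[
\underbrace{0}_{\times 1},\ \underbrace{1}_{\times 2},\ \underbrace{2}_{\times 2},\ \ldots,\ \underbrace{n}_{\times 2},\ \underbrace{n+1}_{\times 1},
\]
whose total length is $1+2n+1=2n+2$, matching the degree of the polynomial.

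Finally, I would apply Lemma \ref{le4}: since $\mathrm{NP}(f)=\mathrm{HP}(\Delta)$, the multiset of $q$-adic valuations of the reciprocal roots of $\LF^{*}(f,T)^{(-1)^{n+1}}$ equals the multiset of slopes of $\mathrm{HP}(\Delta)$ counted with horizontal length, which is exactly $\{0,1,1,2,2,\ldots,n,n,n+1\}$. There is no real obstacle here beyond putting the pieces together; all the substantive work has already been done in Proposition \ref{th5}, Proposition \ref{propo}, and Theorem \ref{th6}. The only thing to be slightly careful about is to check that the ordinary property does transfer from the chain-level statement of Proposition \ref{prop c} to the actual polynomial $\LF^{*}(f,T)^{(-1)^{n+1}}$, which is precisely the content of the definition $\mathrm{NP}(f)=\mathrm{HP}(\Delta)$ together with Theorem \ref{thas}.
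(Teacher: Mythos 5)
Your proposal is correct and follows exactly the paper's route: the paper's own proof is a one-line citation of Lemma \ref{le4}, Proposition \ref{propo}, and Theorem \ref{th6}, and you have simply spelled out the same combination (ordinarity gives $\mathrm{NP}(f)=\mathrm{HP}(\Delta)$, the Hodge numbers with $D=1$ give the slope multiset, and Lemma \ref{le4} converts this into the valuations of the reciprocal roots). No issues.
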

\begin{proof}
	This theorem follows from Lemma \ref{le4}, Proposition \ref{propo} and Theorem \ref{th6}.
\end{proof}
Note that the converse of this theorem is also true, as Proposition \ref{propo} shows that the condition 
$p\equiv 1\bmod (n+1)$ is a necessary and sufficient condition for $f$ to be ordinary. 

Now we are ready to consider the weights for the reciprocal roots of $\LF^*(f,T)^{(-1)^{n+1}}$.
\begin{Th}\label{thm35}
	Suppose $p\nmid (n+1)$. We have
	\begin{align*}
		\LF^*(f,T)^{(-1)^{n+1}}&=(1-T)(1-qT)\prod^{2n}_{i=1}(1-\beta_iT).
	\end{align*}
	For each $1\leq i\leq 2n$, the reciprocal root $\beta_i$ satisfies $|\beta_i|= q^{\frac{n+2}{2}}$. 
\end{Th}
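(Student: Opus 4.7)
The plan is to combine the polynomial form and degree from Adolphson--Sperber (Theorem \ref{th2}) with Deligne's archimedean weight bound, and then separately extract two low-weight reciprocal roots at $\gamma=1$ and $\gamma=q$ via structural analysis of the Newton polytope $\Delta$. Once these two ``trivial'' factors are peeled off, the remaining $2n$-degree polynomial should be pure of top weight $n+2$ by the general purity theorem for non-degenerate toric sums.

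First I would verify the polynomial form and degree: by Proposition \ref{th5}, $f$ is non-degenerate and $\Vol(\Delta)=(2n+2)/(n+2)!$, so Theorem \ref{th2} applied to $f\in\F_q[x_1^{\pm1},\ldots,x_{n+2}^{\pm1}]$ gives that $\LF^*(f,T)^{(-1)^{n+1}}=\prod_{i=1}^{2n+2}(1-\gamma_iT)$ is a polynomial of degree $(n+2)!\Vol(\Delta)=2n+2$. Theorem \ref{thm3} then yields $|S_k^*(f)|\leq 2(n+1)q^{(n+2)k/2}$ for every $k\geq 1$, and Deligne's theorem converts this into $|\gamma_i|\leq q^{(n+2)/2}$ for every $i$.

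Next I would identify the reciprocal roots $1$ and $q$. Since the origin is a vertex of $\Delta$ and the Laurent polynomial inside $\psi$ has no constant term, the same argument already invoked at the end of Section \ref{subsec32} (``removing the trivial eigenvalue'') shows that $1$ appears as a trivial Frobenius eigenvalue on the middle-dimensional cohomology, contributing the factor $(1-T)$. For the factor $(1-qT)$ I would apply the boundary decomposition (Theorem \ref{th215}) to $\det(I-TA_1(f))$ according to the open faces of $C(\Delta)$, then convert back to the L-function via Dwork's trace formula (Theorem \ref{th211}). The weight-$2$ graded piece of the mixed Hodge structure on the middle cohomology, coming from a distinguished low-dimensional face of $C(\Delta)$ adjacent to the origin (which reflects the symmetry between the two facets $\delta_1,\delta_2$), contributes the extra reciprocal root $q$.

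Finally, once $(1-T)(1-qT)$ is factored out, the remaining polynomial of degree $2n$ corresponds to the primitive middle cohomology, which is pure of weight $n+2$ by the structure theorems of Denef--Loeser \cite{DL1991} and Fu \cite{Fu2009} for non-degenerate toric exponential sums. This forces $|\beta_i|=q^{(n+2)/2}$ for all $1\leq i\leq 2n$.

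The hardest step is isolating the $(1-qT)$ factor: archimedean estimates alone cannot detect it, because for $n\geq 1$ and $k\geq 1$ the would-be contribution $q^k$ is asymptotically absorbed into the error bound $O(q^{(n+2)k/2})$ from Theorem \ref{thm3}, so no crude Deligne-type argument separates a reciprocal root of absolute value $q$ from one of absolute value $q^{(n+2)/2}$. One must instead extract the weight-$2$ graded piece directly from the cohomology, either through the boundary decomposition just sketched or through the explicit weight polynomial attached to the Newton polytope by Fu; this cohomological identification is the most technical part of the argument.
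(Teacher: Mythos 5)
Your first three steps (degree $2n+2$ from Theorem \ref{th2} and Proposition \ref{th5}, the Deligne bound $|\gamma_i|\le q^{(n+2)/2}$, and the extraction of the factors $(1-T)$ and $(1-qT)$ from the origin vertex and the edges of $C(\Delta)$ via boundary decomposition and Dwork's trace formula) match the paper. The genuine gap is your final step: you assert that once $(1-T)(1-qT)$ is removed, the remaining degree-$2n$ factor is ``pure of top weight $n+2$ by the general purity theorem for non-degenerate toric sums.'' No such general purity theorem exists. For non-degenerate $f$ whose Newton polytope has the origin as a vertex rather than an interior point, the middle cohomology is genuinely mixed --- the eigenvalues $1$ and $q$ you just extracted are witnesses to this --- and Denef--Loeser and Fu only provide a combinatorial formula for the number of eigenvalues of each weight, which would have to be evaluated for this specific $\Delta$ to confirm that the low-weight part is exactly $\{1,q\}$ and nothing more. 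You never do that computation, so purity of the remaining $2n$ roots is unproved in your argument.

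The paper closes this gap with an elementary counting trick that you could adopt: pair $\LF^*(f,T)^{(-1)^{n+1}}$ with its complex conjugate $\LF^*(-f,T)^{(-1)^{n+1}}$, and use the fact (Theorem \ref{thas}) that the Newton and Hodge polygons share endpoints, together with the Hodge numbers $\{1,2,\dots,2,1\}$ computed in Theorem \ref{th6}, to get $\ord_q\bigl(1\cdot q^2\cdot\prod_{i=1}^{2n}\beta_i\overline{\beta_i}\bigr)=2(n+1)^2$. Since $\beta_i\overline{\beta_i}=q^{w_i}$ with $w_i\le n+2$ by Deligne, this reads $2+\sum_i w_i\le 2+2n(n+2)=2(n+1)^2$, forcing equality and hence $w_i=n+2$ for all $i$. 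This is strictly weaker input than a weight computation \`a la Denef--Loeser but suffices; your route could in principle also work, but only after carrying out the weight-polynomial computation you currently replace with an appeal to a theorem that does not apply as stated.
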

\begin{proof}
	Since the origin is a vertex of $\Delta$, we decompose the cone $C(\Delta)$ via boundary decomposition $B(\Delta)$. Let $N(i)$ be the number of $i$-dimensional face $\Sigma_{i}$ of $C(\Delta)$, where $0\leq i\leq \mathrm{dim}\Delta$. For Newton polyhedron $\Delta=\Delta(f)$, we have $N(0)=1$ and $N(1)=n+3$. 
	Note that $\Sigma_i$ is an open cone and $\Sigma_i\in B(\Delta)$. Let $\overline{\Sigma}_i$ be the closure of $\Sigma_i$.
	For simplicity, we denote the Fredholm determinants as 
	\begin{align*}
		D(T)&=\mathrm{det}\left(I-TA_1(f)\right),\\
		D^{\circ}_i(T)&=\mathrm{det}\left(I-TA_1(\Sigma_i,f^{\overline{\Sigma}_i})\right),\\
		D_i(T)&=\mathrm{det}\left(I-TA_1(\overline{\Sigma}_i,f^{\overline{\Sigma}_i})\right).
	\end{align*}	
	The unique $0$-dimensional cone $\overline{\Sigma}_0$ is the origin
	and $D_0(T)=D^{\circ}_0(T)=1-T$. When $i=1$, each $f^{\overline{\Sigma}_1}$ can be normalized to $x$ by variable substitution. That is,
	\begin{align*}
		\LF^*(f^{\overline{\Sigma}_1},T)
		=\exp \left(\sum^\infty _ {k=1}-\frac{T^k}{k}\right)=1-T.
	\end{align*}
	By formula (\ref{eq25}), we have
	\begin{align*}
		D_1(T) &= \prod_{i=0}^{\infty} \left( \LF^{*}\left(f^{\overline{\Sigma}_1},q^iT\right)\right)^{\binom{i}{i}}
		= (1-T)\left(1-qT\right)\left(1-q^2T\right)\cdots.
	\end{align*} 
	Since the only boundary of $\overline{\Sigma}_1$ are ${\Sigma}_1$ and $\overline{\Sigma}_0$, we get $D^{\circ}_1(T)$ after eliminating $D^{\circ}_0(T)$, i.e., 
	\begin{align*}
		D^{\circ}_1(T)=\frac{D_1(T)}{D_0^{\circ}(T)}=\left(1-qT\right)\left(1-q^2T\right)\prod_{i=3}^{\infty}\left(1-q^iT\right).
	\end{align*}	
	Theorem \ref{th215} shows that $D(T)$ can be expressed as a product of $D^{\circ}_i(T)$ as follow.
	\begin{align*}
		D(T)=\prod^{n+2}_{i=1}\prod^{N(i)}_{j=1}D^{\circ}_i(T)
		=(1-T)(1-qT)^{n+3}\cdots,
	\end{align*}
	Note that $\LF^*(f,T)^{(-1)^{n+1}}$ is a polynomial of degree $2(n+1)$ if $f$ is non-degenerate.
	Combining formula (\ref{eq24}), we obtain
	\begin{align*}
		\LF^*(f,T)^{(-1)^{n+1}}
		=\frac{D(T)D(q^2T)^{\binom{n+2}{2}}\cdots}{D(qT)^{n+2}D(q^3T)^{\binom{n+2}{3}}\cdots} =(1-T)(1-qT)\prod^{2n}_{i=1}(1-\beta_iT),
	\end{align*}
where $|\beta_i| = q^{\frac{w_i}{2}}\leq q^{\frac{n+2}{2}}$. That is, $w_i \leq n+2$. 

If $\beta_i$ is a reciprocal root of $\LF^*(f,T)^{(-1)^{n+1}}$, the conjugate $\overline{\beta_i}$ is a reciprocal root of the 
conjugate L-function 
$$\overline{\LF^*(f,T)}^{(-1)^{n+1}} = {\LF^*(-f,T)}^{(-1)^{n+1}}.$$
By Theorem \ref{thas}, the Newton polygon and Hodge polygon coincide at the end points. Applying this to the 
product $\LF^*(f,T)^{(-1)^{n+1}}\overline{\LF^*(f,T)}^{(-1)^{n+1}}$,  we deduce 
that 
$$ 2(n+1)^2 =2(\sum_{i=1}^n2i+n+1)= \ord_q (1\cdot q^2 \cdot \prod_{i=1}^{2n} \beta_i \overline{\beta_i}) = 2 + \sum_{i=1}^{2n} w_i\leq 2+2n(n+2)=2(n+1)^2.$$
It follows that the inequality must be an equality, that is, all $w_i = n+2$.  
\end{proof}

Formula (\ref{eq34}) relates $\LF^*(f,T)$ to $\LF_{n}(b,T)$. The valuations for the reciprocal roots and poles of $\LF_{n}(b,T)$ follow from Theorem \ref{thm34} and \ref{thm35}.
\begin{Th}\label{thm37}
	Suppose $p\nmid (n+1)$. We have
	\begin{align*}
		\LF_{n}(b,T)^{(-1)^{n+1}}=(1-T)^{(n+1)}
		\prod_{j=2}^{n}\left(1-q^{j-1}T\right)^{\binom{n}{j}(-1)^{j-1}}	
		\prod^{2n}_{i=1}(1-\alpha_iT).
	\end{align*}
	For each $1\leq i\leq 2n$, the reciprocal root $\alpha_i$ satisfies $|\alpha_i|= q^{\frac{n}{2}}$. If $p\equiv 1\bmod (n+1)$, the slope sequence of the $\alpha_i$'s  is given by $\{0, 1, 1, 2, 2, \ldots, n-1, n-1, n\}$.
\end{Th}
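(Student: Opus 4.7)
The plan is to derive Theorem \ref{thm37} from formula (\ref{eq34}), combined with the already-established Theorems \ref{thm34} and \ref{thm35} for the auxiliary toric L-function $\LF^*(f,T)$. Formula (\ref{eq34}) reads
\begin{align*}
\LF_n(b,T) = \LF^*(f, T/q)\, \prod_{i=0}^n (1-q^{i-1}T)^{-(-1)^{n-i+1}\binom{n}{i}},
\end{align*}
so the first step is to raise both sides to the power $(-1)^{n+1}$. A quick sign check, $-(-1)^{n-i+1}\cdot (-1)^{n+1} = (-1)^{i+1}$, transforms the trivial product on the right-hand side into $\prod_{i=0}^{n}(1-q^{i-1}T)^{(-1)^{i+1}\binom{n}{i}}$.

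Next I would insert Theorem \ref{thm35} evaluated at $T/q$, namely
\begin{align*}
\LF^*(f, T/q)^{(-1)^{n+1}} = (1-T/q)(1-T)\prod_{i=1}^{2n}(1-\alpha_i T),
\end{align*}
where $\alpha_i := \beta_i/q$. Because Theorem \ref{thm35} gives $|\beta_i| = q^{(n+2)/2}$, this immediately yields $|\alpha_i| = q^{n/2}$, the complex absolute-value assertion of the theorem. The remaining bookkeeping is clean: the $i=0$ term of the trivial product contributes $(1-T/q)^{-1}$, which cancels the extra $(1-T/q)$; the $i=1$ term is $(1-T)^n$, which combines with the lone $(1-T)$ factor to give $(1-T)^{n+1}$; and the $i\geq 2$ terms assemble precisely into $\prod_{j=2}^{n}(1-q^{j-1}T)^{\binom{n}{j}(-1)^{j-1}}$, matching the shape stated in the theorem.

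Finally, under the extra hypothesis $p\equiv 1\bmod (n+1)$, the slope sequence of the $\alpha_i$'s follows from Theorem \ref{thm34}, which supplies the full slope sequence $\{0, 1, 1, 2, 2, \ldots, n, n, n+1\}$ for $\LF^*(f,T)^{(-1)^{n+1}}$. The two trivial reciprocal roots $1$ and $q$ singled out in Theorem \ref{thm35} account for exactly one slope $0$ and one slope $1$; removing them leaves $\{1, 2, 2, \ldots, n, n, n+1\}$ for the $\beta_i$'s. The normalization $\alpha_i = \beta_i/q$ shifts each $q$-adic valuation down by $1$, producing the claimed sequence $\{0, 1, 1, 2, 2, \ldots, n-1, n-1, n\}$. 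Because Theorems \ref{thm34} and \ref{thm35} absorb all the substantial input---non-degeneracy, dimension of the middle cohomology, Hodge-polygon identification through Proposition \ref{propo}, and boundary-decomposition extraction of the two trivial factors---no new obstacle is expected here; the only care required is the sign and binomial-coefficient bookkeeping above.
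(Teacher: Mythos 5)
Your proof is correct and takes essentially the same route as the paper: the paper likewise derives Theorem \ref{thm37} by substituting Theorem \ref{thm35} (evaluated at $T/q$) and the slope sequence of Theorem \ref{thm34} into formula (\ref{eq34}), though it leaves the sign and cancellation bookkeeping implicit where you have spelled it out. Your sign check $-(-1)^{n-i+1}(-1)^{n+1}=(-1)^{i+1}$, the cancellation of $(1-T/q)^{\pm1}$, the merging into $(1-T)^{n+1}$, and the slope shift by $1$ under $\alpha_i=\beta_i/q$ are all accurate.
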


Based on weights of toric L-function, we get the following slightly more precise upper bound for its associated exponential sum.
\begin{Cor}
	If $p\nmid (n+1)$, we have
		\begin{align*}
		|S_{k,n}(b)+\frac{(q^k-1)^n-(-1)^n(q^k+1)}{q^k}|\leq 2nq^{\frac{nk}{2}}.
	\end{align*}
\end{Cor}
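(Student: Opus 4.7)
The plan is to read off a closed-form trace identity for $S_{k,n}(b)$ from the rational expression for $\LF_n(b,T)^{(-1)^{n+1}}$ in Theorem~\ref{thm37}, and then recognize the contribution of the ``trivial'' (explicit) reciprocal roots and poles as precisely the claimed main term.

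First, I take logarithms of both sides of
$$\LF_{n}(b,T)^{(-1)^{n+1}}=(1-T)^{n+1}\prod_{j=2}^{n}\left(1-q^{j-1}T\right)^{\binom{n}{j}(-1)^{j-1}}\prod_{i=1}^{2n}(1-\alpha_iT),$$
expand each $\log(1-xT)=-\sum_{k\geq 1}x^kT^k/k$, and equate coefficients of $T^k/k$ on both sides. Using the defining relation $\log\LF_{n}(b,T)=\sum_{k\geq 1}S_{k,n}(b)T^k/k$, this yields the explicit trace formula
$$S_{k,n}(b)=(-1)^{n}\left[(n+1)+\sum_{j=2}^{n}\binom{n}{j}(-1)^{j-1}q^{(j-1)k}+\sum_{i=1}^{2n}\alpha_i^{k}\right].$$
Since Theorem~\ref{thm37} gives $|\alpha_i|=q^{n/2}$, the innermost sum over $i$ is bounded in absolute value by $2n\,q^{nk/2}$, and it suffices to verify that the remaining ``elementary'' part equals $-\frac{(q^k-1)^n-(-1)^n(q^k+1)}{q^k}$.

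For this identity I set $y=q^k$, expand $(y-1)^n$ by the binomial theorem, divide by $y$, and separate the $j=0$ and $j=1$ terms to obtain
$$\frac{(y-1)^n}{y}=(-1)^n y^{-1}-n(-1)^n+\sum_{j=2}^{n}\binom{n}{j}(-1)^{n-j}y^{j-1}.$$
Combining this with $(-1)^n(y+1)/y=(-1)^n+(-1)^n y^{-1}$ cancels the $y^{-1}$ contributions in $-(y-1)^n/y+(-1)^n(y+1)/y$; after using the parity identity $(-1)^{n-j+1}=(-1)^{n+j-1}$, what remains is exactly $(-1)^{n}\bigl[(n+1)+\sum_{j=2}^{n}\binom{n}{j}(-1)^{j-1}y^{j-1}\bigr]$, which matches the elementary part of the trace formula above.

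There is no real obstacle here: the argument is essentially bookkeeping in logarithmic derivatives combined with a binomial rearrangement. The only point requiring care is tracking the three parity factors $(-1)^n$, $(q^k-1)^n$, and $(-1)^n(q^k+1)$ simultaneously through the cancellation. Once the algebraic identity is in place, the weight bound $|\alpha_i|=q^{n/2}$ from Theorem~\ref{thm37} delivers the constant $2n$ in the final inequality with no further loss.
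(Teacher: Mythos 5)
Your proof is correct and is essentially the paper's argument: both identify the main term with the explicit ``trivial'' factors of the L-function and bound the remaining $2n$ reciprocal roots by their weight, giving the constant $2n$. The only difference is that the paper works one step earlier, with $\LF^*(f,T)$ from Theorem \ref{thm35} together with the reduction formula (\ref{eq32}), so the main term $-\frac{(q^k-1)^n-(-1)^n(q^k+1)}{q^k}$ appears immediately upon dividing by $q^k$, whereas your route through the factored form of $\LF_n(b,T)$ in Theorem \ref{thm37} forces the binomial rearrangement you carry out --- which does check out.
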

\begin{proof}
	Theorem \ref{thm35} implies that 
	\begin{align*}
		|S_k^*(f)-(-1)^n(q^k+1)|\leq 2nq^{\frac{(n+2)k}{2}}.
	\end{align*}
Combining formula (\ref{eq32}), we get the bound for $S_{k,n}(b)$.
\end{proof}

\begin{remark}
	We finish this paper with two open problems on the estimates of inverted Kloosterman sums. If $n+1$ is divisible by $p$, the related Laurent polynomial $f$ is degenerate and thus the results for toric exponential sums are not tenable. In this case, it is an open problem to determine the optimal square root cancellation for $S_n(\chi,b)$ in general. The case $n=1$ with $p=2$ is already handled in \cite{Kat1995}. 
The second question concerns the $q$-adic slope sequence. If $p$ is not equivalent to $1$ modulo $n+1$, the Newton polygon corresponding to $f$ is strictly above its Hodge polygon. Under this assumption, can one still obtain the explicit $q$-adic slope sequence?
\end{remark}


\nocite{*}
\bibliographystyle{amsalpha}
\bibliography{ref}


\end{document}